\newtheorem{thm}{theorem}[section]
\newtheorem{theorem}[thm]{Theorem}
\newtheorem{proposition}[thm]{Proposition}
\newtheorem{lema}[thm]{Lemma}
\newtheorem{corollary}[thm]{Corollary}
\newtheorem{remark}[thm]{Remark}
\newtheorem{example}[thm]{Example}
\newtheorem{definition}[thm]{Definition}
\newenvironment{proof}[1][Proof]{\noindent\textbf{#1\,} }{\hfill \rule{0.5em}{0.5em}\medskip}
\title{A note on $\mathbb{Z}$-gradings on the Grassmann algebra and Elementary Number Theory}
\author{Alan Guimar\~aes \\
	Departamento de Matemática\\
	Universidade Federal do Rio Grande do Norte\\
	Natal, RN, 59078-970, Brazil\\
	alansimoes10@hotmail.com\\
	\\
	Claudemir Fidelis\thanks{ {\bf Corresponding Author}}~\thanks{Supported by FAPESP grant No.~2019/12498-0}\\ 
	Unidade Acadêmica de Matemática\\
	Universidade Federal de Campina Grande\\
	Campina Grande, PB 58429-970, Brazil
		\\ and \\Instituto de Matem\'atica e Estat\'istica\\
		USP, 05508-090 S\~ao Paulo, SP, Brazil\\
claudemir.fidelis@professor.ufcg.edu.br\\
\\
	Plamen Koshlukov\thanks{Partially supported by FAPESP grant No.~2018/23690-6 and by CNPq grant No.~302238/2019-0}\\ 
	Department of Mathematics\\
	UNICAMP, 13083-859 Campinas, SP,  Brazil\\
	plamen@unicamp.br
} 
\date{}
\begin{document}
\maketitle

\begin{abstract}
	Let $E$ be the Grassmann algebra of an infinite dimensional vector space $L$ over a field of characteristic zero. In this paper, we study the $\mathbb{Z}$-gradings on $E$ having the form $E=E_{(r_{1},r_{2}, r_{3})}^{(v_{1},v_{2}, v_{3})}$, in which each element of a basis of $L$ has $\mathbb{Z}$-degree $r_{1}$, $r_{2}$, or $r_{3}$. We provide a criterion for the support of this structure to coincide with a subgroup of the group $\mathbb{Z}$, and we describe the graded identities for the corresponding gradings. We strongly use Elementary Number Theory as a tool, providing an interesting connection between this classical part of Mathematics, and PI Theory. Our results are generalizations of the approach presented in \cite{alan-brandao -claud}.

 \medskip
 \noindent
 \textbf{Keywords} 
 Grassmann algebra; graded algebra; graded identity; full support;  greatest common divisor.
 
 \medskip
 
 \noindent
 \textbf{Mathematics Subject Classification 2010:} 11A05, 11A51, 15A75, 16R10, 16W50
\end{abstract}

\section{Introduction}

The description of all possible gradings on an algebra $A$ by a group is an important problem in the structure theory of graded rings and its applications. The gradings on ample classes of algebras have already been classified; these include the full matrix algebras, the upper triangular matrix algebras, block-triangular matrix algebras. Assuming that $G$ is a finite abelian group, there is a well-known duality between $G$-gradings and $G$-actions on $A$. In order for the duality to hold one needs a field of characteristic 0 containing a primitive $|G|$-th root of unity. Since $G$ is finite abelian, then $G$ is isomorphic to its dual group, denoted by $G^\ast$, that is the group of all irreducible characters of $G$. Given a grading on $A$, one defines the action through the
dual of $G$ and vice versa. We refer the reader to \cite[Section 3.3, pp. 65–69]{gzbook}, for further details concerning the above duality. 

In this paper, we deal with a specific algebra, namely the Grassmann algebra, graded by the infinite cyclic group $\mathbb{Z}$.

Let $F$ be an infinite field of characteristic different from 2, and let $L$ be a vector space over $F$ with a fixed basis $e_{1}$, $e_{2}$, \dots. The infinite dimensional Grassmann algebra $E$ of $L$ over $F$ is the vector space with a basis consisting of 1 and all products $e_{i_1}e_{i_2}\cdots e_{i_k}$ where $i_{1}<i_{2}<\ldots <i_{k}$, $k\geq 1$. The multiplication in $E$ is induced by $e_{i}e_{j}=-e_{j}e_{i}$ for all $i$ and $j$. We shall denote the above canonical basis of $E$ by $B_{E}$. The Grassmann algebra has a natural $\mathbb{Z}_2$-grading $E_{can}=E_{(0)}\oplus E_{(1)}$, where $E_{(0)}$ is the vector space spanned by 1 and all products $e_{i_1}\cdots e_{i_k}$ with even $k$ while $E_{(1)}$ is the vector space spanned by the products with odd $k$. It is well known that $E_{(0)}$ is the centre of $E$ and $E_{(1)}$ is the ``anticommuting'' part of $E$. Recall that the theory developed by Kemer \cite{basekemer2} has been shaping much of the theory of PI algebras during the last three decades. In Kemer's theory the Grassmann algebra $E$ together with its natural grading, plays a crucial role in it. When the ground field is of characteristic 2, the Grassmann algebra is commutative. 

In recent years a substantial number of papers has presented results on gradings and graded identities on the Grassmann algebra. In the vast majority of them, two conditions have been assumed, namely: (1) the grading group is finite; (2) the generators $e_1$, $e_2$, \dots, $e_m$, \ldots{} of $E$ are homogeneous. This means that the basis of the vector space $L$ is homogeneous. The graded identities for the natural (or canonical) grading $E_{can}$ are well known, see for example \cite{GMZ}. More generally, in the papers \cite{centroneP, disil, LFG}, all homogeneous $\mathbb{Z}_{2}$-gradings on $E$ and the respective graded identities were studied. (Here and in what follows $\mathbb{Z}_2$ stands for the cyclic group of order 2.) Furthermore, in \cite{centroneG, disilplamen, GFD}, gradings and graded identities on $E$ by finite abelian groups of order larger than 2 were also investigated.  
Gradings on $E$ by infinite groups were studied for the first time in \cite{aagpk}. In that paper, the authors presented three types of  $\mathbb{Z}$-gradings on $E$, denoted respectively by $E^{\infty}$, $E^{k^\ast}$ and $E^{k}$, for every non-negative integer $k$. 
These structures are the most canonical $\mathbb{Z}$-gradings on $E$ due to its close relation with the superalgebras $E_{\infty}$, $E_{k^\ast}$ and $E_{k}$ studied in \cite{disil}. It should be noted that there are various other gradings on $E$ by the group $\mathbb{Z}$.

Considering a $G$-grading on $E$, the support of the grading is defined as  the set of the elements $g\in G$ such that there is at least one non-zero element of $E$ of degree $g$. Thus the support of $E^{k^\ast}$ is $\{0,1,\ldots, k\}$ while $E^{\infty}$ and $E^{k}$ have the set $\{0,1,\ldots\}$ of the non-negative integers as support. Furthermore in the paper \cite{alan-brandao -claud}, the authors dealt with $\mathbb{Z}$-gradings on $E$ of full support; they described the $\mathbb{Z}$-graded identities for these structures providing a necessary and sufficient condition for two structures of this type to be isomorphic as $\mathbb{Z}$-graded algebras. The authors of the latter paper pointed out that the support of a grading on $E$ can have interesting properties as a subset of $\mathbb{Z}$. Recently, in \cite{Alan A G}, the first named author of the present paper investigated gradings and graded identities by the additive group of the real numbers, starting a study on gradings by groups that may even be non-enumerable.

We present the general method of constructing homogeneous gradings on the Grassmann algebra $E$. Let $(G, \cdot)$ be a group and $n$ a positive integer. Suppose there exist pairwise commuting $g_1$, \dots, $g_n$ in $G$, where $g_{i}\neq g_{j}$, for all $i$, $j\in\{1,\ldots, n\}$. Additionally, fix $v_{1}$, \dots, $v_{n}$, where $v_{j}\in\mathbb{N}\cup\{\infty\}$, for $1\leq j\leq n$. Let 
 \[
 L=L_{g_{1}}^{v_{1}}\oplus\cdots\oplus L_{g_{n}}^{v_{n}}
 \]
 be a decomposition of $L$ in a direct sum of $n$ subspaces such that $v_{j}=\dim L_{g_{j}}^{v_{j}}$, and assume that the generators $e_i$ of $L$ satisfy $e_i\in \cup_{j=1}^n L_{g_j}^{v_j}$. We define the degree of a generator $e_{k}\in L$ as follows:
 \[
 \|e_{k}\|=g_{j}\quad \mbox{\textrm{ if and only if }}\quad  e_{k}\in L_{g_j}^{v_j}.
 \]
We extend the degree for all monomial $e_{k_1}e_{k_2}\cdots e_{k_s}$ by
 \[
 \|e_{k_1}e_{k_2}\cdots e_{k_s}\|=\|e_{k_1}\|\cdot\|e_{k_2}\|\cdots \|e_{k_s}\|,
 \]
and it gives us a structure of $G$-grading on $E$, denoted by $E_{(g_{1},\ldots, g_{n})}^{(v_{1},\ldots, v_{n})}$. This grading is called \textsl{$n$-induced} $G$-grading on $E$. The elements $g_{1}$, \dots, $g_{n}$ are called \textsl{lower indices} and $v_{1}$, \dots, $v_{n}$ are the \textsl{upper indices} of the grading. We shall denote by $S_{(g_{1},\ldots, g_{n})}^{(v_{1},\ldots, v_{n})}$ the support of the $G$-grading $E_{(g_{1},\ldots, g_{n})}^{(v_{1},\ldots, v_{n})}$.
 
 A complete study of the group graded identities of $E$ is still far from being understood. In the light of the above discussion it is an interesting problem to investigate more closely the structure of the graded identities for the Grassmann algebra. The description of such  identities employs various techniques. One may consider multilinear polynomial identities since they determine all identities of a given algebra in characteristic 0, see \cite{disil, disilplamen}. One of the main tools in this case is  the representation theory of the symmetric and of the general linear groups, and refinements, see for a detailed account the monographs \cite{drbook, gzbook}. When the field $K$ is infinite, one has to consider multihomogeneous identities, for more details of this technique we recommend \cite[Section 4.2]{drbook}. The methods one uses in this case are mostly based on Combinatorial Algebra \cite{centroneP,GFD,aagpk}. A technique that has been used recently is based on Elementary Number Theory. We recall that such an approach appeared implicitly in \cite{disilplamen}, and more explicitly in \cite{CP, alan-brandao -claud}. Although elementary as a method, the  results obtained suggest that a good understanding of such a relation between Number theory and PI theory could give us some insights for the nature of the gradings and graded identities not only on $E$, but also in the case of other important algebras.
  
 In this paper, we shall study gradings on $E$ by the infinite cyclic group $\mathbb{Z}$. Throughout, following the line of research presented in \cite{alan-brandao -claud}, we  deal with the structures of type $E_{(g_{1},\ldots, g_{n})}^{(v_{1},\ldots, v_{n})}$, where $G=\mathbb{Z}$. In this paper we consider initially the case $n=3$. Our results are related to the support $S_{(g_{1},\ldots, g_{n})}^{(v_{1},\ldots, v_{n})}$ and its connection with the structure of the algebra $E_{(g_{1},\ldots, g_{n})}^{(v_{1},\ldots, v_{n})}$. We provide a criterion for a such structure to be of full support, and we describe the respective graded identities. Finally, we discuss grading and graded identities of the $n$-induced $\mathbb{Z}$-gradings on $E$ for $n>3$. More precisely, we will explicit the relationship between the graded identities by the group $\mathbb{Z}$ and $\mathbb{Z}_d$, for a fixed $d\in\mathbb{Z}$. To this end, we employ essentially Elementary Number Theory as a tool, providing an interesting connection between this area and PI-Theory. 
 
 We hope that our results about the 3-induced  $\mathbb{Z}$-gradings on $E$ may shed additional light on the $n$-induced  $\mathbb{Z}$-gradings, for every $n>3$, and consequently on gradings by an arbitrary cyclic group.

\section{Preliminaries}\label{Preliminares}

To simplify both the exposition and the notation, in this paper, we consider the ground field $F$ of characteristic zero. It should be mentioned that several of the results obtained in Subsection  \ref{sub characterization} do not depend on the field $F$, and are characteristic-free. Throughout all algebras will be assumed associative and unitary. The algebras and vector spaces will be always over $F$. We also fix a group $G$.

In this section we recall briefly the tools we use in what follows.

A $G$-grading on an associative algebra $A$ is a vector space decomposition $A=\oplus_{g\in G}A_{g}$ such that  $A_{g}A_{h}\subset A_{gh}$ for every $g$, $h\in G$. Given a non-zero element $a\in A_{g}$ we say that $a$ is homogeneous and its $G$-degree is $\|a\|=g$. The support of a grading is defined as the set $Sup(A)=\{g\in G\mid A_{g}\neq 0\}$. A subspace $B\subset A$ is graded, or homogeneous if $B=\oplus_{g\in G}(B\cap A_{g})$. Similarly, one can define graded subalgebras and graded ideals. Let $A=\oplus_{g\in G}A_{g}$ and $B=\oplus_{g\in G}B_{g}$ be $G$-graded algebras and let $\varphi\colon A\to B$ be an algebra homomorphism. Then $\varphi$ is a \textsl{$G$-homomorphism} (or a graded homomorphism)  whenever $\varphi(A_{g})\subseteq B_{g}$, for every $g\in G$. This means $\varphi$ respects the gradings on $A$ and $B$. Analogously $ \varphi $ is a $G$-isomorphism when it is a $G$-homomorphism and an algebra isomorphism. If $H$ is a subgroup of $G$ we define the quotient $G/H$-grading on $A$ in a natural way as $A=\oplus_{\overline{g}\in G/H}A_{\overline{g}}$.  Here $A_{\overline{g}}=\oplus_{h\in H}A_{gh}$ is the direct sum of the homogeneous component of the $G$-grading on $A$ corresponding to the $H$-coset defined by $g$.

Next we recall the definition of a free $G$-graded algebra. Let $ X=\cup_{g\in G}X_{g}$ be the disjoint union of infinite countable sets of variables $X_{g}=\{x_{1}^{g},x_{2}^{g},\ldots\}$, $g\in G$.  Assuming that for each $g\in G$ the elements of the set $X_{g}$ are of $G$-degree $g$, the free associative algebra $F\langle X|G\rangle$ has a natural $G$-grading $\oplus_{g\in G} F\langle X|G\rangle_{g}$. Here $F\langle X|G\rangle_{g}$ is the vector subspace of $F\langle X|G\rangle$ spanned by all monomials of $G$-degree $g$. The algebra $F\langle X|G\rangle$ is free in the following sense. Given a $G$-graded algebra $A=\oplus_{g\in G} A_g$ and a map $h\colon X\to A$ such that $h(X_g)\subseteq A_g$ for every $g$ then $h$ can be uniquely extended to a $G$-homomorphism $\varphi\colon F\langle X|G\rangle\to A$. An ideal $I$ of $F\langle X|G\rangle$ is said to be a $T_{G}$-ideal if it is invariant under all $G$-endomorphisms $\varphi\colon F\langle X|G\rangle\to  F\langle X|G\rangle$. A polynomial $f(x_{1},\ldots, x_{n})$ in $F\langle X|G\rangle$ is called \textsl{graded polynomial}. The $n$-tuple
$(a_{1},\ldots, a_{n})$ such that $a_{i}\in A_{\|x_{i}\|}$ for $i=1$, \dots, $n$, is called \textsl{$f$-admissible substitution} (or simply admissible substitution), and we say that $f(x_{1},\ldots, x_{n})$ is a \textsl{graded polynomial identity} of $A$ if $f(a_{1},\ldots, a_{n})=0$ for each $f$-admissible substitution. We denote by $T_{G}(A)$ the set of all $G$-graded polynomial identities for $A$. It is easy to show that $T_{G}(A)$ is a $T_{G}$-ideal of $F\langle X|G\rangle$. The converse also holds: every $T_G$-ideal is the ideal of the $G$-graded identities for a suitable $G$-graded algebra. It is well known that whenever $A$ is a $G$-graded algebra over a field of characteristic 0 then $T_{G}(A)$ is generated, as a
$T_G$-ideal, by its multilinear polynomials. Over an infinite field one has to take into account the multihomogeneous polynomials instead of the multilinear ones. If $A$ and $B$ are $G$-graded algebras we say that $A$ and $B$ are PI-equivalent as $G$-graded algebras if $T_{G}(A)=T_{G}(B)$.  Given a $T_{G}$-ideal $I$ of $F\langle X|G\rangle$, the \textsl{variety of $G$-graded algebras} $\mathfrak{V}^{G}$ associated to $I$ is the class of all $G$-graded algebras $A$ such that $I$ is contained in $T_{G}(A)$. In this case, the $T_G$-ideal $I$ is denoted by $T_{G}(\mathfrak{V}^G)$. Finally, we say that the variety $\mathfrak{V}^G$ is generated by the $G$-graded algebra $A$ if $T_{G}(\mathfrak{V}^G)=T_{G}(A)$. 

Now we return to the Grassmann algebra.

\begin{definition}
	Given a monomial $w=e_{i_1}\cdots e_{i_k}$ in $E$, the set $supp (w)=\{e_{i_1},\ldots , e_{i_k}\}$ is called the \textsl{support} of $w$. If $w_{1}=e_{j_1}\cdots e_{j_l}$ is in $E$, then $w$ and $w_{1}$ have \textsl{pairwise disjoint supports} if $supp (w)\cap supp (w_{1})=\emptyset$. In this case we have that $ww_{1}\neq 0$. The length of $w$ is the cardinality of $supp (w)$, and we denote it by $|w|$. 
\end{definition}

Group gradings on the Grassmann algebra have been studied in several papers, see \cite{centroneG, disil, Alan A G,alan-brandao -claud, aagpk}. There are two important types of gradings on $E$.

\begin{definition}\label{deantedez2}
	A $G$-grading $E=\oplus_{g\in G}E_{g}$ on the Grassmann algebra is said to be:
	\begin{enumerate}
		\item [$(a)$] A homogeneous grading if each $e_{i}\in L$ is homogeneous in the grading.
		\item [$(b)$] A grading of full support if its support is the whole of the group $G$.
	\end{enumerate}
\end{definition}

Below we recall the construction of the homogeneous $\mathbb{Z}_{2}$-gradings on $E$. It was given for the first time in \cite{disil}. All homogeneous $\mathbb{Z}_{2}$-gradings on $E$ are given by the following possibilities of $\mathbb{Z}_{2}$-degree on the basis of $L$:

\begin{align*}
\|e_{i}\|_{k} & =\begin{cases} 0,\text{ if }  i=1,\ldots,k\\ 1, \text{ otherwise } 
\end{cases},\\	
\|e_{i}\|_{k^\ast} & =\begin{cases} 1,\text{ if }  i=1,\ldots,k\\ 0, \text{ otherwise } 
\end{cases},\\
\|e_{i}\|_{\infty} & =\begin{cases} 0,\text{ for }  i\text{ even }\\ 1, \text{ for i odd } 
\end{cases}.
\end{align*}
	
We then induce the $\mathbb{Z}_2$-grading on $E$ by putting 
\[
\|e_{j_{1}}\cdots e_{j_{n}}\| =\|e_{j_{1}}\|+\cdots + \|e_{j_{n}}\|,
\]
and extending it to $E$ by linearity. These gradings are denoted by $E_{k}$, $E_{k^\ast}$ and $E_{\infty}$, respectively. When $\|e_{i}\|=1$ for all $i$, we denote the respective $\mathbb{Z}_{2}$-grading by $E_{can}$, which is the natural (or canonical) grading on $E$. We draw the readers' attention that if $L$ is homogeneous (and $G=\mathbb{Z}_2$), then one can always choose a basis of $L$ which is homogeneous in the grading. 

In \cite{disilplamen}, the authors started the study of gradings and graded identities on $E$ by a cyclic group of prime order (different from two). They reduced the study of graded identities to only two cases, which are as follows.
\begin{equation}\label{inf}
\|e_{i}\|_{\infty}=\begin{cases} 0,\text{ if }  i\equiv 0\mod p\\ 1,\text{ if } i\equiv 1\mod p \\
\vdots\\
p-1, \text{ if }i\equiv p-1\mod p \\
\end{cases},
\end{equation}
and
\begin{equation}\label{l_1l_2}
\|e_{i}\|_{l_1,\ldots,l_{p-1}}=\begin{cases} 1,\text{ if }  i=1,\ldots,l_1\\
2,\text{ if }  i=l_1+1,\ldots,l_1+l_2\\
\vdots\\
p-1,\text{ if }  i=l_1+\ldots+l_{p-2}+1,\ldots,l_1+\ldots+l_{p-1}\\
0, \mbox{ otherwise }.
\end{cases}.\end{equation}
The $\mathbb{Z}_p$-degree of the element $e_{i_1}\cdots e_{i_k}$ is given by $\|e_{i_1}\cdots e_{i_k}\|=\|e_{i_1}\|+\cdots +\|e_{i_k}\|$, where the sum is taken modulo $p$. In the same paper the respective bases for graded polynomials identities and the graded codimensions were described. 

Later on in this paper we prove that the graded identities of several $\mathbb{Z}$-gradings on $E$ can be obtained via the graded identities of gradings on $E$ by a finite cyclic group.

We recall the construction of the homogeneous  $\mathbb{Z}$-gradings on $E$ that were introduced in \cite{aagpk}, and later on studied in detail in \cite{alan-brandao -claud}. We write $\mathbb{N}=\{1, 2, 3,\ldots\}$ and $\mathbb{N}_{0}=\mathbb{N}\cup \{0\}$.
Let $n\in\mathbb{N}$, $r_{1}<\ldots <r_{n}$, where each $r_{j}\in\mathbb{Z}$, and $v_{1}$, \dots, $v_{n}$ are such that $v_{j}\in\mathbb{N}$ or $v_{j}=\infty$, for $1\leq j\leq n$. Consider 
\[
L=L_{r_{1}}^{v_{1}}\oplus\cdots\oplus L_{r_{n}}^{v_{n}},
\]
a decomposition of $L$ into $n$ subspaces such that $v_{j}=\dim L_{n_{j}}^{v_{j}}$. Clearly we can assume that the generators $e_i$ of $L$ satisfy $e_i\in \cup_{j=1}^n L_{n_j}^{v_j}$. In other words we split the basis $e_1$, $e_2$, \dots{} of $L$ into $n$ disjoint sets (the bases of the $L_{n_j}$). Given a generator $e_{k}\in L$, we define 
\[
\|e_{k}\|=r_{j}\quad \mbox{\textrm{ if and only if }}\quad  e_{k}\in L_{n_j}^{v_j}.
\]
We extend the degree to all monomials $e_{k_1}e_{k_2}\cdots e_{k_s}$ by
\[
\|e_{k_1}e_{k_2}\cdots e_{k_s}\|=\|e_{k_1}\|+\|e_{k_2}\|+\cdots +\|e_{k_s}\|.
\]
This provides a structure of a $\mathbb{Z}$-grading on $E$, denoted by $E_{(r_{1},\ldots, r_{n})}^{(v_{1},\ldots, v_{n})}$. In this case we say that $E_{(r_{1},\ldots, r_{n})}^{(v_{1},\ldots, v_{n})}$ is the \textsl{$n$-induced} $\mathbb{Z}$-grading. We say that $r_{1}$, \dots,  $r_{n}$ are the \textsl{lower indices} and $v_{1}$, \dots, $v_{n}$ are the \textsl{upper indices} of the grading. 

Recall that we denote by $S_{(r_{1},\ldots, r_{n})}^{(v_{1},\ldots, v_{n})}$ the support of the above grading.

In the paper \cite{alan-brandao -claud}, the authors gave a criterion for the support of a $2$-induced $\mathbb{Z}$-grading on $E$ to be a subgroup of $\mathbb{Z}$. It should be mentioned that such a criterion was obtained by using in an essential way elementary Number theory, namely the use of properties of the greatest common divisor. More precisely the following theorem was proved.

\begin{theorem}\cite[Theorem 3.1]{alan-brandao -claud}\label{o caso dois indices}
$S_{(r_{1}, r_{2})}^{(v_{1}, v_{2})}=\langle d\rangle$ if and only if $v_{1}=v_{2}=\infty$, $r_{1}<0< r_{2}$ and $\gcd(r_{1}, r_{2})=d$. 
\end{theorem}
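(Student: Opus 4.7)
My plan is to prove the two directions separately, with the number-theoretic work concentrated in sufficiency.

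For sufficiency, assume $v_{1}=v_{2}=\infty$, $r_{1}<0<r_{2}$, and $\gcd(r_{1},r_{2})=d$. The inclusion $S_{(r_{1},r_{2})}^{(v_{1},v_{2})}\subseteq \langle d\rangle$ is immediate because every nonzero homogeneous monomial of $E$ has $\mathbb{Z}$-degree $a r_{1}+b r_{2}$ with $a,b\in\mathbb{N}_{0}$, and any such integer combination lies in $d\mathbb{Z}$. For the reverse inclusion, let $n\in\mathbb{Z}$ and set $s=-r_{1}>0$, so $d=\gcd(s,r_{2})$; write $s=ds'$, $r_{2}=dr_{2}'$ with $\gcd(s',r_{2}')=1$. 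The target is to solve $br_{2}-as=nd$, equivalently $br_{2}'-as'=n$, with $a,b\in\mathbb{N}_{0}$. For $n\geq 0$, since $s'$ is a unit modulo $r_{2}'$, there exists a unique $a\in\{0,1,\dots,r_{2}'-1\}$ with $as'\equiv -n\pmod{r_{2}'}$; then $b=(n+as')/r_{2}'$ is automatically a nonnegative integer. For $n<0$ the roles are reversed: pick $b\in\{0,1,\dots,s'-1\}$ with $br_{2}'\equiv -|n|\pmod{s'}$, and set $a=(br_{2}'+|n|)/s'\geq 0$. The hypothesis $v_{1}=v_{2}=\infty$ is what makes these $a$, $b$ admissible.

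For necessity, assume $S_{(r_{1},r_{2})}^{(v_{1},v_{2})}=\langle d\rangle=d\mathbb{Z}$, which is unbounded above and below. If $r_{1}\geq 0$, then $ar_{1}+br_{2}\geq 0$ for every admissible $(a,b)$, contradicting unboundedness below; the symmetric argument rules out $r_{2}\leq 0$, so $r_{1}<0<r_{2}$. If $v_{1}$ were finite, then $ar_{1}+br_{2}\geq v_{1}r_{1}$ for all admissible $(a,b)$, again contradicting unboundedness below; hence $v_{1}=\infty$, and the symmetric argument gives $v_{2}=\infty$. Finally, $r_{1},r_{2}\in S$ forces $d\mid r_{1}$ and $d\mid r_{2}$, whence $d\mid\gcd(r_{1},r_{2})$; conversely every element of $S$ is an integer combination of $r_{1}$, $r_{2}$, so $S\subseteq \gcd(r_{1},r_{2})\mathbb{Z}$, giving $\gcd(r_{1},r_{2})\mid d$. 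Equality follows.

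The main obstacle is the Bezout-with-nonnegativity step in sufficiency: turning an abstract solvability into explicit $a,b\in\mathbb{N}_{0}$. The elementary-number-theoretic trick (reducing modulo $r_{2}'$ or $s'$ to extract a canonical nonnegative representative, then reading off the other coordinate as an automatically nonnegative quotient) is exactly the point where the divisibility structure interacts nontrivially with the monomial basis of $E$, and it is the one place where the hypotheses $v_{j}=\infty$ cannot be weakened. Everything else reduces to boundedness and divisibility bookkeeping.
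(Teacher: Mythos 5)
Your proof is correct. Note that this paper does not actually prove Theorem \ref{o caso dois indices} --- it is imported verbatim from \cite[Theorem 3.1]{alan-brandao -claud} --- so there is no in-paper argument to compare against; but your method (Bézout plus the reduction-modulo-a-cofactor trick to force a canonical nonnegative representative, with the other coefficient then read off as a nonnegative quotient, and boundedness arguments for necessity) is exactly the Euclidean-division technique the paper itself deploys in the proof of Theorem \ref{caracteriz particular}, so your route is the expected one. Two minor points you leave implicit: the necessity step ``$r_1\geq 0$ implies $ar_1+br_2\geq 0$'' silently uses the paper's ordering convention $r_1<r_2$ (otherwise you should first observe that the two lower indices cannot have the same weak sign), and the unboundedness argument presumes $d\neq 0$, which holds because $r_1\neq r_2$ forces at least one nonzero homogeneous generator and hence $S\neq\{0\}$.
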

The theorem above provides the conditions satisfied by the indices of $E_{(r_{1}, r_{2})}^{(v_{1}, v_{2})}$ that guarantee $S_{(r_{1}, r_{2})}^{(v_{1}, v_{2})}$ is a subgroup of $\mathbb{Z}$. 

In this paper we give a partial criterion for a general $n$-induced $\mathbb{Z}$-grading on $E$ to be of full support. It gives conditions on the lower indices of the grading. In the next section we present a similar approach for the $3$-induced $\mathbb{Z}$-gradings on $E$.

\section{Full support: a criterion in terms of Number theory}\label{sub characterization}

As mentioned earlier, several of the results obtained in this section are characteristic-free, the only restriction we impose that the characteristic of the base field cannot be equal to 2. In fact the only statement that requires a field of characteristic 0, is Proposition \ref{Hcentral geral-prop}. we do not claim it is an essential restriction but our proof relies on the base field being of characteristic 0. The validity of Proposition \ref{Hcentral geral-prop} over other fields remains open. 

\begin{lema}
Let $r_{1}$, \dots, $r_{n}\in\mathbb{Z}$, where $n\geq 2$. Then $S_{(r_{1},\ldots, r_{n})}^{(v_{1},\ldots, v_{n})}\leq \mathbb{Z}$ if and only if $S_{(r_{1},\ldots, r_{n})}^{(v_{1},\ldots, v_{n})}=\langle d\rangle $, where $d=\gcd(r_{1},\ldots, r_{n})$.
\end{lema}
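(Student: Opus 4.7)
The plan is a direct two-inclusion argument exploiting that every degree appearing in $E_{(r_{1},\ldots,r_{n})}^{(v_{1},\ldots,v_{n})}$ is, by construction, a sum of some of the $r_{j}$'s.

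First I would record the basic general fact that any $s\in S_{(r_{1},\ldots,r_{n})}^{(v_{1},\ldots,v_{n})}$ is of the form $s=a_{1}r_{1}+\cdots+a_{n}r_{n}$ with $a_{j}\in\mathbb{Z}_{\geq 0}$ and $a_{j}\leq v_{j}$. This follows immediately from the definition of the grading, since a nonzero homogeneous monomial $e_{k_{1}}\cdots e_{k_{s}}$ has $\mathbb{Z}$-degree obtained by adding the $\mathbb{Z}$-degrees of its distinct factors. In particular, writing $d=\gcd(r_{1},\ldots,r_{n})$, we always have the inclusion
\[
S_{(r_{1},\ldots,r_{n})}^{(v_{1},\ldots,v_{n})}\subseteq \langle d\rangle.
\]

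The implication $(\Leftarrow)$ is then trivial: every subgroup $\langle d\rangle$ of $\mathbb{Z}$ is in particular a subgroup, so $S=\langle d\rangle$ yields $S\leq \mathbb{Z}$.

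For $(\Rightarrow)$, suppose $S:=S_{(r_{1},\ldots,r_{n})}^{(v_{1},\ldots,v_{n})}$ is a subgroup of $\mathbb{Z}$. Since $v_{j}\geq 1$ by hypothesis, each subspace $L_{r_{j}}^{v_{j}}$ contains at least one basis generator, whose $\mathbb{Z}$-degree is $r_{j}$; hence $r_{j}\in S$ for every $j=1,\ldots,n$. Because $S$ is a subgroup, it must contain the subgroup generated by $\{r_{1},\ldots,r_{n}\}$, and elementary number theory identifies this with $\langle \gcd(r_{1},\ldots,r_{n})\rangle=\langle d\rangle$. Combining with the inclusion $S\subseteq \langle d\rangle$ from the first paragraph yields $S=\langle d\rangle$, as required.

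There is essentially no real obstacle here: the argument reduces to the standard fact that the subgroup of $\mathbb{Z}$ generated by finitely many integers coincides with the cyclic group generated by their gcd, together with the observation that each $r_{j}$ trivially occurs in the support. The only care needed is to verify that a single generator of $L_{r_{j}}^{v_{j}}$ is admissible (which is true since $v_{j}\geq 1$), so that indeed $r_{j}\in S$; the bounds $a_{j}\leq v_{j}$ coming from the Grassmann relations play no role in this particular lemma, although they will become essential in later statements characterizing full support.
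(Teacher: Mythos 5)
Your proposal is correct and follows essentially the same route as the paper: the paper writes $S=\langle d'\rangle$ and shows $d'\mid d$ (from $r_j\in S$ for all $j$) and $d\mid d'$ (from $d'$ being a non-negative integer combination of the $r_j$), which is exactly your pair of inclusions $\langle d\rangle\subseteq S$ and $S\subseteq\langle d\rangle$ in divisibility language. Your explicit remark that $S\subseteq\langle d\rangle$ holds unconditionally is a harmless (and slightly cleaner) reorganization, not a different argument.
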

\begin{proof}
If $S_{(r_{1},\ldots, r_{n})}^{(v_{1},\ldots, v_{n})}\leq \mathbb{Z}$ then there exists $d'\in\mathbb{Z}$, with $d'>0$, such that $S_{(r_{1},\ldots, r_{n})}^{(v_{1},\ldots, v_{n})}=\langle d' \rangle$. As $r_{1},\ldots, r_{n}\in \langle d' \rangle$, we have that $d'$ divides every one of $r_{1}$, \dots, $r_{n}$. On the other hand, since $d'\in S_{(r_{1},\ldots, r_{n})}^{(v_{1},\ldots, v_{n})}$, we 
obtain that there exist $\alpha_{1}$, \dots, $\alpha_{n}\in\mathbb{N}_{0}$ satisfying
\[
\alpha_{1}r_{1} + \cdots + \alpha_{n}r_{n}=d',
\]
and hence $d$ divides $d'$. Therefore $d'=d$ and the proof is complete.
\end{proof}

Now let $n=3$ and suppose $S_{(r_{1},r_{2}, r_{3})}^{(v_{1},v_{2}, v_{3})}$ is a subgroup of $\mathbb{Z}$. Then it follows that up to the order of the lower indices, we have $r_{1}<0<r_{3}$, $v_{1}=v_{3}=\infty$, and $S_{(r_{1},r_{2}, r_{3})}^{(v_{1},v_{2}, v_{3})}=\langle d\rangle$, where $d=\gcd(r_{1},r_{2}, r_{3})$. It remains to describe the conditions that  $r_{2}$ and $v_{2}$ must satisfy. The following lemma is a standard fact of elementary Number theory and we record it for future use. Its proof is easy and we omit it.

\begin{lema}\label{basico mas útil}
	Let $r_{1}$, $r_{2}$, $r_{3}\in\mathbb{Z}$, and suppose that $d'=\gcd(r_{1}, r_{3})$. Then $\gcd(d', r_{2})=\gcd(r_{1}, r_{2}, r_{3})$. In particular, if $\gcd(r_{1}, r_{2}, r_{3})=1$, then $\overline{r_{2}}$ generates the group $\mathbb{Z}_{d'}=\mathbb{Z}/\langle d'\rangle$. \hfill \rule{0.5em}{0.5em}\medskip
\end{lema}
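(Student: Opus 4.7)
The plan is to verify the identity $\gcd(d',r_2)=\gcd(r_1,r_2,r_3)$ by the standard mutual divisibility argument, using only the universal property of the greatest common divisor (a positive integer $c$ equals $\gcd(a_1,\ldots,a_k)$ iff $c$ divides every $a_i$ and every common divisor of the $a_i$ divides $c$). Write $e=\gcd(d',r_2)$ and $f=\gcd(r_1,r_2,r_3)$; both are positive integers, so it suffices to prove $e\mid f$ and $f\mid e$.

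For the direction $e\mid f$: since $e\mid d'=\gcd(r_1,r_3)$, one has $e\mid r_1$ and $e\mid r_3$; combined with $e\mid r_2$, the integer $e$ is a common divisor of $r_1,r_2,r_3$, and therefore divides their gcd $f$. For the direction $f\mid e$: from $f\mid r_1$ and $f\mid r_3$ we obtain $f\mid \gcd(r_1,r_3)=d'$; together with $f\mid r_2$ this gives $f\mid\gcd(d',r_2)=e$. This is essentially the associativity of $\gcd$, namely $\gcd(\gcd(r_1,r_3),r_2)=\gcd(r_1,r_2,r_3)$, and it is the only substantive content of the lemma.

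For the ``in particular'' statement, assume $\gcd(r_1,r_2,r_3)=1$. Applying the identity just proved yields $\gcd(d',r_2)=1$. Now invoke the well-known characterization: an element $\overline{a}\in\mathbb{Z}_{n}$ generates $\mathbb{Z}_{n}$ if and only if $\gcd(a,n)=1$ (which follows from B\'ezout: $\gcd(a,n)=1$ gives $ua+vn=1$, so $\overline{ua}=\overline{1}$, hence $\overline{a}$ generates; conversely if $\overline{a}$ generates, then $\overline{1}=\overline{ka}$ for some $k$, i.e.\ $ka\equiv 1\pmod n$, forcing $\gcd(a,n)=1$). Applying this with $a=r_2$ and $n=d'$ gives the conclusion.

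No step here poses any genuine obstacle; the only point worth flagging is that one must be careful with signs and with the edge case $d'=0$ (which happens only if $r_1=r_3=0$, in which case $\mathbb{Z}_{d'}=\mathbb{Z}$ and the statement should be interpreted accordingly), but in the intended application $d'>0$ and everything is routine. This is precisely why the authors state it without proof.
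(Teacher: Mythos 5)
Your proof is correct and is precisely the standard mutual-divisibility argument (associativity of $\gcd$ plus the B\'ezout characterization of generators of $\mathbb{Z}_{d'}$) that the paper has in mind; the paper itself omits the proof as routine. Your remark about the edge case $d'=0$ is a reasonable extra precaution, and as you note it does not arise in the intended applications.
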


\begin{example}
	Consider the graded algebra $E_{(-3, 1, 6)}^{(\infty, 2, \infty)}=\oplus_{r\in \mathbb{Z}}A_{r}$, and assume $\|e_{1}\|=\|e_{2}\|=1$. We claim that this grading on $E$ is of full support. In fact, every integer can be written as either $z_{1}=3q$ or $z_{2}=3q + 1$, or $z_{3}=3q + 2$. Clearly one has that $z_{1}\in S_{(-3, 1, 6)}^{(\infty, 2, \infty)}$. Moreover, note that $e_{1}A_{3q}\subset A_{3q + 1}$ and $e_{1}e_{2}A_{3q}\subset A_{3q + 2}$. Then $z_{2}$, $z_{3}\in S_{(-3, 1, 6)}^{(\infty, 2, \infty)}$, and therefore $S_{(-3, 1, 6)}^{(\infty, 2, \infty)}=\mathbb{Z}$.
\end{example}

The example above can be generalized. 
\begin{theorem}\label{caracteriz particular}
	Let $r_{1}$, $r_{2}$, $r_{3}\in\mathbb{Z}$, with $r_{1}<0<r_{3}$ and $\gcd(r_{1}, r_{2}, r_{3})=1$. We have that $S_{(r_{1}, r_{2}, r_{3})}^{(\infty, v_{2}, \infty)}=\mathbb{Z}$ if and only if
	\begin{enumerate}
		\item [(A)] either $S_{(r_{1}, r_{3})}^{(\infty,  \infty)}=\mathbb{Z}$, or
		\item [(B)] $r_{2}\notin S_{(r_{1}, r_{3})}^{(\infty,  \infty)}=\langle d^\prime\rangle$, where $d^\prime=\gcd(r_{1},r_{3})$, and $v_{2}\geq d^\prime -1$.
	\end{enumerate} 
\end{theorem}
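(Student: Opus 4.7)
The first step is to observe that, since a basis monomial $e_{i_1}\cdots e_{i_k}$ of $E$ is non-zero precisely when its factors are pairwise distinct, the support admits the explicit description
\[
S_{(r_1,r_2,r_3)}^{(\infty,v_2,\infty)} = \{\alpha_1 r_1 + \alpha_2 r_2 + \alpha_3 r_3 : \alpha_1,\alpha_3\in\mathbb{N}_0,\ 0 \leq \alpha_2 \leq v_2\}.
\]
Setting $d'=\gcd(r_1,r_3)$, the whole question then reduces to a congruence analysis modulo $d'$, exploiting Theorem \ref{o caso dois indices} for the two-index subgrading and Lemma \ref{basico mas útil}, which under the standing hypothesis $\gcd(r_1,r_2,r_3)=1$ guarantees that $\overline{r_2}$ generates $\mathbb{Z}_{d'}$.

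For the ``if'' direction, case (A) is immediate: with $d'=1$, Theorem \ref{o caso dois indices} gives $S_{(r_1,r_3)}^{(\infty,\infty)}=\mathbb{Z}$, so already choosing $\alpha_2=0$ shows $S_{(r_1,r_2,r_3)}^{(\infty,v_2,\infty)}\supseteq\mathbb{Z}$. For case (B) I would take $z\in\mathbb{Z}$, use that $\overline{r_2}$ generates $\mathbb{Z}_{d'}$ to pick $\alpha_2\in\{0,1,\ldots,d'-1\}\subseteq\{0,\ldots,v_2\}$ with $z\equiv\alpha_2 r_2\pmod{d'}$, and then apply Theorem \ref{o caso dois indices} once more to write $z-\alpha_2 r_2\in\langle d'\rangle$ as $\alpha_1 r_1+\alpha_3 r_3$ with $\alpha_1,\alpha_3\in\mathbb{N}_0$; this realizes $z$ in the support.

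For the ``only if'' direction, I would argue the contrapositive. Assume neither (A) nor (B) holds; then $d'>1$, and the negation of (B) forces $v_2\leq d'-2$ (the remaining alternative, namely $r_2\in\langle d'\rangle$, would yield $d'=\gcd(d',r_2)=\gcd(r_1,r_2,r_3)=1$ via Lemma \ref{basico mas útil}, contradicting $d'>1$). Since $\overline{r_2}$ generates $\mathbb{Z}_{d'}$, the set $\{k\overline{r_2}:0\leq k\leq v_2\}$ has at most $v_2+1\leq d'-1$ elements and so misses some residue class $\overline{c}\in\mathbb{Z}_{d'}$. But every element of $S_{(r_1,r_2,r_3)}^{(\infty,v_2,\infty)}$ is congruent to $\alpha_2 r_2$ modulo $d'$ for some $\alpha_2\leq v_2$, hence has residue in the missing subset; so no integer in the class $\overline{c}$ is in the support, contradicting full support.

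\textbf{Expected main obstacle.} None of the steps is genuinely hard: the argument is a clean pigeonhole/congruence reduction resting on Theorem \ref{o caso dois indices} and Lemma \ref{basico mas útil}. The only care required is in the necessity direction, where one has to correctly reduce ``not (B)'' to $v_2\leq d'-2$ using $\gcd(r_1,r_2,r_3)=1$, and then translate the deficit $v_2+1<d'$ into a missing residue class modulo $d'$.
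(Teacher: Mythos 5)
Your proposal is correct and follows essentially the same route as the paper: both directions rest on Theorem \ref{o caso dois indices} for the two-index support $\langle d'\rangle$ and on Lemma \ref{basico mas útil} to see that $\overline{r_2}$ generates $\mathbb{Z}_{d'}$, with the sufficiency argument being virtually identical. The only (cosmetic) difference is in the necessity direction, where the paper argues directly from the admissible representation of the single element $-r_2$ to get $d'\leq\beta+1\leq v_2+1$, while you run the contrapositive via a pigeonhole count of the residues $\alpha_2\overline{r_2}$, $0\leq\alpha_2\leq v_2$; both are the same congruence computation modulo $d'$.
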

\begin{proof}
	Let $r_{1}$, $r_{2}$, $r_{3}\in\mathbb{Z}$, we consider the following two possibilities: (i) $1=\gcd(r_{1},r_{2}, r_{3})=\gcd(r_{1}, r_{3})$, or (ii) $1=\gcd(r_{1},r_{2}, r_{3})< d^{\prime}=\gcd(r_{1}, r_{3})$. If $1=\gcd(r_{1},r_{2}, r_{3})=\gcd(r_{1}, r_{3})$ holds, Theorem~\ref{o caso dois indices} implies that $S_{(r_{1}, r_{2}, r_{3})}^{(\infty, v_{2}, \infty)}=\mathbb{Z}$ is equivalent to statement $(A)$.
	
	Now we assume the conditions of $(B)$ and take $z\in \mathbb{Z}$, and we require that $z>0$. Euclidean division implies that there exist non-negative integers $q$ and $0\leq r < d'$ such that
		\begin{equation}\label{AEC}
		z=qd' +r.
		\end{equation}
	On the other hand, Lemma \ref{basico mas útil} yields that $\overline{r_{2}}$ generates $\mathbb{Z}_{d'}$. Hence, there exists an integer $0\leq u \leq d^{\prime} -1$ such that
	\[\overline{ur_{2}}=\overline{z}=\overline{r}.\]
	It follows that there exists integer $q_{1}$ such that $r=ur_{2} + q_{1}d'$, hence
\[
z=qd' + ur_{2} + q_{1}d'=(q+q_{1})d' + ur_{2}.
\]
	Assume that $U=\{e_{1}, \ldots, e_{v_2}\}$ is the set of all generators of homogeneous degree $r_{2}$. Since $S_{(r_{1}, r_{3})}^{(\infty,  \infty)}=\langle d^{\prime}\rangle\subset S_{(r_{1}, r_{2}, r_{3})}^{(\infty, v_{2}, \infty)}$, we take a monomial $w_{1}$ satisfying $\|w_{1}\|=(q+q_{1})d'$, and $supp(w_{1})\cap U=\emptyset$. Now, as $v_{2}\geq d' -1$, putting $w_{2}=e_{1} \cdots e_{u}$, it follows that 
	\[z=(q+q_{1})d' + ur_{2}=\|w_{1}\|+\|w_{2}\|=\|w_{1}w_{2}\|\in S_{(r_{1}, r_{2}, r_{3})}^{(\infty, v_{2}, \infty)},\]
	 which implies $z\in\mathbb{Z}$. Now let us show that $-z\in S_{(r_{1}, r_{2}, r_{3})}^{(\infty, v_{2}, \infty)}$. There exist a  negative integer $q_{2}$ and $0\leq s < d'$ such that
	 	\[-z = q_{2}d' +  s.
	 	\]
By the previous argument, there exists $0\leq v\leq d'-1$ so that
	 	\[\overline{vr_{2}}=\overline{z}=\overline{s}.\]
	 	It follows that there exists integer $q_{3}$ such that $s=vr_{2} + q_{3}d'$, hence
	 	\[-z=q_{2}d' + vr_{2} + q_{3}d'=(q_{2}+q_{3})d' + vr_{2}.\]
	 	Since $\langle d^\prime\rangle$ is contained in the support, we can take a monomial in $E$ of degree $(q_{2}+q_{3})d'$. Now, repeating the process which was done above, it follows that $-z\in S_{(r_{1}, r_{2}, r_{3})}^{(\infty, v_{2}, \infty)}$, so the grading is of full support.
	
	Now we will show that $S_{(r_{1}, r_{2}, r_{3})}^{(\infty, v_{2}, \infty)}=\mathbb{Z}$ and (ii) $1=\gcd(r_{1},r_{2}, r_{3})< d^{\prime}=\gcd(r_{1}, r_{3})$ imply statement $(B)$. First of all, notice that $r_{2}\notin S_{(r_{1}, r_{3})}^{(\infty, \infty)}$. As $-r_{2}\in S_{(r_{1}, r_{2}, r_{3})}^{(\infty, v_{2}, \infty)}$, there exist $\alpha$, $\beta$, $\gamma$ in $\mathbb{N}_{0}$ satisfying
	\[\alpha r_{1}+ \beta r_{2}+ \gamma r_{3}=-r_{2},\]
	where $0<\beta\leq v_{2}$. Hence
	\[\alpha r_{1}+\gamma r_{3}=(-\beta -1)r_{2},\]
	so $d^{\prime}$ divides $(-\beta -1)r_{2}$. As $\gcd(d^{\prime}, r_{2})=1$, we have that $d^{\prime}$ divides $|(-\beta -1)|=\beta+1>0$. Therefore,
\[	d^{\prime}\leq \beta+1\leq v_{2}+1 ,\]
	and we are done.
\end{proof}

\begin{corollary}\label{caracteriz}
	Let $r_{1}$, $r_{2}$, $r_{3}\in\mathbb{Z}$, with $r_{1}<0<r_{3}$ and $d=\gcd(r_{1}, r_{2}, r_{3})$. We have $S_{(r_{1}, r_{2}, r_{3})}^{(\infty, v_{2}, \infty)}=\langle d\rangle$ if and only if
	\begin{enumerate}
		\item $S_{(r_{1}, r_{3})}^{(\infty,  \infty)}=\langle d\rangle$ or
		\item $r_{2}\notin S_{(r_{1}, r_{3})}^{(\infty,  \infty)}=\langle d^\prime\rangle$, where $d^\prime=\gcd(r_{1},r_{3})$, and $v_{2}\geq d^\prime -1$.
	\end{enumerate} 
\end{corollary}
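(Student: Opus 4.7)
The plan is to reduce the corollary directly to Theorem \ref{caracteriz particular} by a rescaling argument. Since $d=\gcd(r_1,r_2,r_3)$, I would set $r_i' := r_i/d$ for $i=1,2,3$, so that $\gcd(r_1',r_2',r_3')=1$ and the signs $r_1'<0<r_3'$ are preserved. Consider the $\mathbb{Z}$-grading $E_{(r_1',r_2',r_3')}^{(\infty,v_2,\infty)}$: it is literally the same vector-space decomposition of $E$ as $E_{(r_1,r_2,r_3)}^{(\infty,v_2,\infty)}$, only the label of each homogeneous component is divided by $d$. Hence
\[
S_{(r_1',r_2',r_3')}^{(\infty,v_2,\infty)} \;=\; \tfrac{1}{d}\, S_{(r_1,r_2,r_3)}^{(\infty,v_2,\infty)},
\]
and so $S_{(r_1,r_2,r_3)}^{(\infty,v_2,\infty)}=\langle d\rangle$ is equivalent to $S_{(r_1',r_2',r_3')}^{(\infty,v_2,\infty)}=\mathbb{Z}$.

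Having reduced to the coprime case, I would invoke Theorem \ref{caracteriz particular} applied to $r_1',r_2',r_3'$: the rescaled grading has full support if and only if either $S_{(r_1',r_3')}^{(\infty,\infty)}=\mathbb{Z}$, or $r_2'\notin S_{(r_1',r_3')}^{(\infty,\infty)}$ and $v_2$ exceeds the appropriate bound involving $\gcd(r_1',r_3')$. The remaining work is purely dictionary between the primed and unprimed data.

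For the translation I would use Theorem \ref{o caso dois indices} twice. First, $S_{(r_1',r_3')}^{(\infty,\infty)}=\langle \gcd(r_1',r_3')\rangle$, while $S_{(r_1,r_3)}^{(\infty,\infty)}=\langle d'\rangle$ with $d'=\gcd(r_1,r_3)$; since $\gcd(r_1',r_3')=d'/d$, the equality $S_{(r_1',r_3')}^{(\infty,\infty)}=\mathbb{Z}$ holds iff $d'=d$, which is exactly $S_{(r_1,r_3)}^{(\infty,\infty)}=\langle d\rangle$, giving condition (1). Second, under the scaling, the membership $r_2'\in\langle d'/d\rangle$ is equivalent to $r_2\in\langle d'\rangle$, so the non-membership clause of (B) in Theorem \ref{caracteriz particular} matches the non-membership clause of (2) in the corollary. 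The upper-index hypothesis on $v_2$ transcribes verbatim once this correspondence is set up.

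The main obstacle I expect is book-keeping rather than a conceptual difficulty: one has to verify carefully that scaling all $\mathbb{Z}$-degrees by $1/d$ yields an honest $\mathbb{Z}$-grading on $E$ with support exactly the scaled set (so that the reduction is legitimate), and then track how $d'=\gcd(r_1,r_3)$ behaves under the scaling to conclude that Theorem \ref{caracteriz particular}, stated for $\gcd=1$, translates cleanly to the asserted statement for arbitrary $d$. Once these correspondences are nailed down, both directions of the ``if and only if'' follow immediately from the two directions already established in Theorem \ref{caracteriz particular}.
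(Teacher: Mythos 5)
Your rescaling strategy is a genuinely different route from the paper's: the paper simply reruns the argument of Theorem \ref{caracteriz particular} with $d$ in place of $1$ (``a slight and obvious modification''), whereas you reduce to the coprime case by dividing all degrees by $d$ and then invoke the theorem as a black box. The reduction itself is legitimate: the primed grading is the same vector-space decomposition of $E$ with relabelled components, so $S_{(r_1,r_2,r_3)}^{(\infty,v_2,\infty)}=\langle d\rangle$ is indeed equivalent to $S_{(r_1',r_2',r_3')}^{(\infty,v_2,\infty)}=\mathbb{Z}$, and your translations of condition (1) and of the non-membership clause in (2) are correct. In principle this buys the corollary for free from the theorem, with no need to repeat any argument.

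The gap is in your final step, where you claim the hypothesis on $v_2$ ``transcribes verbatim.'' It does not. Theorem \ref{caracteriz particular}(B), applied to the primed data, requires $v_2\geq \gcd(r_1',r_3')-1=d'/d-1$, whereas condition (2) of the corollary asks for $v_2\geq d'-1$ with $d'=\gcd(r_1,r_3)$; these thresholds coincide only when $d=1$. The discrepancy is real, not book-keeping: for $(r_1,r_2,r_3)=(-6,2,6)$ one has $d=2$, $d'=6$, $r_2=2\notin\langle 6\rangle=S_{(-6,6)}^{(\infty,\infty)}$, and already $v_2=2$ gives support $\{-6\alpha+2\beta+6\gamma \mid \alpha,\gamma\geq 0,\ 0\leq\beta\leq 2\}=2\mathbb{Z}=\langle d\rangle$, even though $v_2<d'-1=5$ and condition (1) also fails. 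So your method, carried out honestly, yields the threshold $v_2\geq d'/d-1$, and it in fact shows that the ``only if'' direction cannot hold with the threshold $d'-1$ as printed. You must either track the factor of $d$ through the dictionary and state the bound as $d'/d-1$, or justify the identification $\gcd(r_1',r_3')=d'$ --- which is false whenever $d>1$. As written, the proposal asserts an equivalence between the theorem's condition (B) and the corollary's condition (2) that does not hold.
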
 
\begin{proof}
	The proof is a slight and obvious modification of that of Theorem \ref{caracteriz particular}.
\end{proof}

\begin{definition}
	Let $G$ be an abelian group and $H\leq G$. We say that a $G$-grading $E=\oplus_{g\in G}E_{g}$ is a $H$-central $G$-grading, if
	\begin{itemize}
		\item $E=\oplus_{g\in G}E_{g}$ is homogeneous,
		\item $E=\oplus_{g\in G}E_{g}$ is of full support, and
		\item For each $h\in H$, the component $E_{h}$ has infinitely many monomials of even length with pairwise disjoint supports.
	\end{itemize}
	When $G=\mathbb{Z}$ and $H=\langle d\rangle$, we simply say that the grading is $d$-central. We write $E=E^{c}$ to indicate that $E$ is endowed with a $d$-central $\mathbb{Z}$-grading.
\end{definition}

Let $\pi_{H}\colon F\langle X|G\rangle\to F\langle X|G/H\rangle$ be the homomorphism of free algebras given by
\[\pi_{H}(x_{i}^{g})=x_{i}^{\overline{g}},\] 
where $x_{i}^{g}$ denotes a variable of $G$-degree $g$ while $x_{i}^{\overline{g}}$ is a variable of $G/H$-degree $\overline{g}$. When $G=\mathbb{Z}$ and $H=\langle d\rangle$ the previous homomorphism will be denoted by $\pi_{d}$.

The following proposition is one of the key steps in obtaining our main results. Its proof is based on multilinear polynomials hence one needs a field of characteristic 0.

\begin{proposition}\label{Hcentral geral-prop}
	Let $E^{c}=\bigoplus_{g\in G}A_{g}$ be a $H$-central $G$-grading on $E$ and let $B=\bigoplus_{\overline{g}\in G/H}A_{\overline{g}}$ be the respective $G/H$-grading. Let $I=T_{G/H}(B)$ and $J=\{f\in F\langle X|G\rangle\mid \pi_{H}(f)\in I\}$. Over a field of characteristic zero, we have that $T_{G}(E^{c})= J$.	
\end{proposition}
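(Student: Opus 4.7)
The plan is to prove the two inclusions $J \subseteq T_G(E^c)$ and $T_G(E^c) \subseteq J$ separately. The first is immediate: given $f \in J$ and an admissible $G$-substitution $x_i^{g_i} \mapsto a_i \in A_{g_i}$, since $A_{g_i} \subseteq A_{\overline{g_i}}$, this is also an admissible $G/H$-substitution for $\pi_H(f) \in I$; the two evaluations produce the same element of $E$ (purely a relabelling of variables), which equals zero because $\pi_H(f) \in T_{G/H}(B)$.

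For the reverse inclusion, pick $f \in T_G(E^c)$. Since $\mathrm{char}\, F = 0$, we may assume $f(x_1^{g_1}, \ldots, x_n^{g_n})$ is multilinear, and it suffices to show $\pi_H(f)$ vanishes on every admissible $G/H$-substitution $x_i^{\overline{g_i}} \mapsto b_i \in A_{\overline{g_i}} = \bigoplus_{h \in H} A_{g_i h}$. By multilinearity, we may further reduce to the case where each $b_i$ is homogeneous in the $G$-grading, say $b_i \in A_{g_i h_i}$ for some $h_i \in H$.

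The heart of the argument is to promote this substitution to a $G$-admissible one for $f$ itself. Using the $H$-central hypothesis, for each $i$ we choose a monomial $c_i \in A_{h_i^{-1}}$ of even length (hence central in $E$); the clause asserting the existence of infinitely many such monomials with pairwise disjoint supports in each component $A_h$, $h \in H$, lets us select the $c_i$ so that $c_1, \ldots, c_n, b_1, \ldots, b_n$ all have pairwise disjoint supports. Since $G$ is abelian, $c_i b_i \in A_{g_i h_i} \cdot A_{h_i^{-1}} \subseteq A_{g_i}$, so $(c_1 b_1, \ldots, c_n b_n)$ is $f$-admissible and $f(c_1 b_1, \ldots, c_n b_n) = 0$. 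By multilinearity of $f$ and centrality of each $c_i$, this rearranges to $c_1 c_2 \cdots c_n \cdot f(b_1, \ldots, b_n) = 0$. The product $c_1 \cdots c_n$ is a nonzero monomial of $E$ whose support is disjoint from $\mathrm{supp}(b_j)$ for every $j$; a direct Grassmann computation shows that left multiplication by such a monomial is injective on the subspace of $E$ spanned by elements whose support is disjoint from $\mathrm{supp}(c_1\cdots c_n)$. Since $f(b_1, \ldots, b_n)$ lies in this subspace, we conclude $f(b_1, \ldots, b_n) = 0$, which is the same element of $E$ as $\pi_H(f)(b_1, \ldots, b_n)$, as required.

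The main technical point I expect to verify carefully is the simultaneous choice of the central monomials $c_i$: they must have the prescribed $G$-degrees $h_i^{-1}$, be of even length, and have supports pairwise disjoint and avoiding the finite supports of the $b_j$. This is exactly the content of the $H$-central hypothesis, so the obstacle is essentially one of bookkeeping rather than a substantive difficulty. Characteristic zero enters only through the reduction to multilinear polynomials; without it, one would need a multihomogeneous analogue of the same argument, which is precisely the issue left open in the remark preceding the proposition.
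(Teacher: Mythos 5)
Your proof is correct and follows essentially the same route as the paper's intended argument: the paper's own ``proof'' is merely a citation of the analogous results in the references, and the argument there is precisely your multilinear reduction followed by the promotion of a $G/H$-admissible substitution to a $G$-admissible one via even-length (hence central) monomials of the prescribed $H$-degrees chosen with pairwise disjoint supports, finishing by cancelling the resulting monomial factor through injectivity of multiplication on elements of disjoint support. The one point worth flagging is that your first inclusion (like the statement itself) tacitly assumes $\pi_H$ does not identify distinct variables occurring in $f$ --- otherwise, e.g., $[x_1^{g},x_1^{gh}]$ lies in $J$ while it need not be a graded identity of $E^{c}$ --- a convention that is harmless once one works, as you do, with multilinear polynomials whose variables carry distinct subscripts.
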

\begin{proof}
We follow word by word, with small modifications, the idea of either \cite[Corollary 5.7]{alan-brandao -claud} or \cite[Proposition 3.3]{centroneG}. 
\end{proof}

\section{Graded identities for gradings of full support}\label{sect4}

Till the end of the paper, we fix two positive integers $a$ and $b$, and  an integer $c$ such that $-b<c<a$.

We consider the respective decomposition of $L$ of the form
\[
L=L^{v}_{-b}\oplus L^{k}_{c}\oplus L^{u}_{a}
\]
such that $S_{(-b,c, a)}^{(v,k, u)}=\mathbb{Z}$. According to Theorem \ref{caracteriz particular}, we have that $v=u=\infty$ and $k$ is a non-negative  integer satisfying additional conditions. In this section we study the graded identities for these gradings on $E$. 

\subsection{Case $(A)$}

We start with the graded identities of the $3$-induced $\mathbb{Z}$-gradings on $E$ given as in item $(A)$ of Theorem \ref{caracteriz particular}. 

Let $k$ be a positive integer, and consider the $\mathbb{Z}_{2}$-gradings $E_{k}$ and $E_\infty$ given after Definition \ref{deantedez2}. The corresponding graded identities were described in \cite{disil}. Denote by $I_k$ the $T_{2}$-ideal of the $\mathbb{Z}_{2}$-graded identities of $E_{k}$, and by $I_\infty$ $T_{2}$-ideal of the $\mathbb{Z}_{2}$-graded identities of $E_{\infty}$. We denote by
\begin{itemize}
	\item $\mathfrak{V}_{(-1, 0, 1)}^{(\infty, k,\infty )}$ the variety of $\mathbb{Z}$-graded algebras defined by the polynomials
	\[
	\{f\in F\langle X|\mathbb{Z}\rangle\mid \pi_{2}(f)\in I_{k}\},
	\]
	\item $\mathfrak{V}_{(-1, 0, 2)}^{(\infty, k,\infty )}$ the variety of $\mathbb{Z}$-graded algebras defined by the polynomials
	\[
	\{f\in F\langle X|\mathbb{Z} \rangle\mid \pi_{2}(f)\in I_{\infty}\}.
	\]
	\end{itemize}

\begin{theorem}\label{r1r2r3=0}
Let $k$, $a$, $b\in\mathbb{N}$. If $\gcd(a,b)=1$, then 
\begin{enumerate}
\item [(1)] $E_{(-b, 0, a)}^{(\infty, k,\infty )}$ is of full support. 
\item [(2)] $E_{(-b, 0, a)}^{(\infty, k,\infty )}$ generates the variety $\mathfrak{V}_{(-1, 0, 1)}^{(\infty, k,\infty )}$, when the product $a b$ is odd.

\item [(3)] $E_{(-b, 0, a)}^{(\infty, k,\infty )}$ generates the variety $\mathfrak{V}_{(-1, 0, 2)}^{(\infty, k,\infty )}$, when $a b$ is even.
	\end{enumerate}
\end{theorem}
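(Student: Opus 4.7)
The plan is to prove part (1) directly from the machinery of Section~\ref{sub characterization}, and then deduce parts (2) and (3) simultaneously by recognizing $E_{(-b,0,a)}^{(\infty,k,\infty)}$ as a $\langle 2\rangle$-central $\mathbb{Z}$-grading and applying Proposition~\ref{Hcentral geral-prop} with $G=\mathbb{Z}$, $H=\langle 2\rangle$, so that $G/H\cong \mathbb{Z}_2$.

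For (1): apply Theorem~\ref{caracteriz particular} with $r_1=-b$, $r_2=0$, $r_3=a$. Since $\gcd(a,b)=1$ we have $\gcd(r_1,r_2,r_3)=1$ and $d'=\gcd(-b,a)=1$, so by Theorem~\ref{o caso dois indices} one gets $S_{(-b,a)}^{(\infty,\infty)}=\langle 1\rangle=\mathbb{Z}$. This is condition (A) of Theorem~\ref{caracteriz particular}, whence $S_{(-b,0,a)}^{(\infty,k,\infty)}=\mathbb{Z}$.

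For the $2$-centrality, fix an even $h\in\mathbb{Z}$. By (1) there exist $\alpha_0,\gamma_0\geq 0$ with $h=-\alpha_0 b+\gamma_0 a$, and for every $t\in\mathbb{N}_0$ the pair $(\alpha_0+ta,\gamma_0+tb)$ is another valid representation, producing a monomial of length $\alpha_0+\gamma_0+t(a+b)$. If $ab$ is odd then $a+b$ is even and $h\equiv \alpha+\gamma\pmod 2$, so the length is automatically even whenever $h$ is even. If $ab$ is even then $a+b$ is odd, and the length parity alternates with $t$; thus infinitely many values of $t$ give even length. The singular case $h=0$ is handled separately: all representations are $(sa,sb)$ with $s\geq 0$, and we take $s=1$ if $ab$ is odd (length $a+b$ even) or $s=2$ if $ab$ is even (length $2(a+b)$ even). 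For each retained representation we use the infinite supplies of generators in $L_{-b}^\infty$ and $L_a^\infty$ to pick pairwise disjoint sets of basis elements, giving infinitely many monomials of even length with pairwise disjoint supports. Hence the grading is $2$-central.

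By Proposition~\ref{Hcentral geral-prop},
\[
T_{\mathbb{Z}}\!\left(E_{(-b,0,a)}^{(\infty,k,\infty)}\right)=\{f\in F\langle X|\mathbb{Z}\rangle\mid \pi_2(f)\in T_2(B)\},
\]
where $B$ is the induced $\mathbb{Z}_2$-grading on $E$. It remains to identify $B$ up to $\mathbb{Z}_2$-isomorphism. In case (2), both $a$ and $b$ are odd, so every element of $L_{-b}^\infty\oplus L_a^\infty$ is in $\mathbb{Z}_2$-degree $1$, while $L_0^k$ lies in $\mathbb{Z}_2$-degree $0$. This is a homogeneous $\mathbb{Z}_2$-grading with exactly $k$ generators of degree $0$ and infinitely many of degree $1$, hence $B\cong E_k$ as $\mathbb{Z}_2$-graded algebras and $T_2(B)=I_k$. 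In case (3), exactly one of $a,b$ is even (since $\gcd(a,b)=1$), so both $\mathbb{Z}_2$-components receive infinitely many generators of $L$, giving $B\cong E_\infty$ and $T_2(B)=I_\infty$. These identifications invoke the known fact from \cite{disil} that the $T_2$-ideal of a homogeneous $\mathbb{Z}_2$-grading on $E$ depends only on the cardinalities of the two classes of homogeneous generators. Substituting back yields parts (2) and (3).

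The main obstacle I expect is the parity bookkeeping in verifying $2$-centrality, especially the degenerate degree $h=0$ where one must avoid the trivial empty representation; everything else is a fairly mechanical application of the reduction principle of Proposition~\ref{Hcentral geral-prop}.
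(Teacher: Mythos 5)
Your proposal is correct and follows essentially the same route as the paper: part (1) via Theorem~\ref{o caso dois indices} (the degree-$0$ generators do not affect the support), and parts (2)--(3) by checking $2$-centrality, identifying the induced $\mathbb{Z}_2$-grading as $E_k$ or $E_\infty$, and invoking Proposition~\ref{Hcentral geral-prop}. The only difference is that you spell out the parity bookkeeping for $2$-centrality, which the paper states as ``it suffices to check'' without detail; your verification is sound.
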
 
\begin{proof}
	Statement (1): Since $S_{(-b, a)}^{(\infty,\infty )}= S_{(-b, 0, a)}^{(\infty, k,\infty )}$, we apply Theorem \ref{o caso dois indices}. 

	Statement (2): It suffices to check that $E_{(-b, 0, a)}^{(\infty, k,\infty )}$ is a $2$-central $\mathbb{Z}$-grading. Then it induces the $\mathbb{Z}_{2}$-grading $E_{k}$. Now we apply Proposition \ref{Hcentral geral-prop}.  
	
	Statement (3) is proved using similar arguments.
\end{proof}

\begin{corollary}
	If $\gcd(a,b)=1$, then $E_{(-b, 0, a)}^{(\infty, \infty,\infty )}$ generates $\mathfrak{V}_{(-1, 0, 2)}^{(\infty, k,\infty )}$. In particular, if  the product $a b$ is even and $k\in\mathbb{N}$, the algebras $E_{(-b, 0, a)}^{(\infty, \infty,\infty )}$ and $E_{(-b, 0, a)}^{(\infty, k,\infty )}$ are PI-equivalent. 
\end{corollary}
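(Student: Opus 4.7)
The plan is to apply Proposition \ref{Hcentral geral-prop} to $E_{(-b,0,a)}^{(\infty,\infty,\infty)}$ with $G=\mathbb{Z}$ and $H=2\mathbb{Z}$, so the task reduces to (i) verifying that the $\mathbb{Z}$-grading on $E_{(-b,0,a)}^{(\infty,\infty,\infty)}$ is $2$-central and (ii) identifying the induced quotient $\mathbb{Z}/2\mathbb{Z}$-grading. Once both are in place, the first assertion of the corollary falls out from the proposition, and the PI-equivalence statement follows by comparing with Theorem \ref{r1r2r3=0}(3).

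For (i), the grading is homogeneous by construction, and full support follows from Theorem \ref{o caso dois indices}: since $\gcd(a,b)=1$ we have $S_{(-b,a)}^{(\infty,\infty)}=\mathbb{Z}\subseteq S_{(-b,0,a)}^{(\infty,\infty,\infty)}$. For the combinatorial requirement on each even-degree component, fix $h=2m\in 2\mathbb{Z}$ and pick a non-zero monomial $w_0$ of degree $h$ whose factors lie only in $L_{-b}\cup L_a$ (this exists because $S_{(-b,a)}^{(\infty,\infty)}=\mathbb{Z}$). Since $L_{-b}$ and $L_a$ are both infinite dimensional, one can replicate the $\mathbb{Z}$-degree pattern of $w_0$ using fresh generators from the same components, obtaining infinitely many monomials of degree $h$ with pairwise disjoint supports; if $|w_0|$ is odd one appends one fresh generator from $L_0$ (available because $v_2=\infty$) to each, which fixes the parity without destroying disjointness.

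For (ii), under reduction modulo $2$ the generators of $L_{-b}$, $L_0$, $L_a$ acquire $\mathbb{Z}_2$-degrees $b \bmod 2$, $0$, $a\bmod 2$ respectively. Since $\gcd(a,b)=1$ at least one of $a$, $b$ is odd, so at least one of $L_{-b}$, $L_a$ contributes infinitely many generators of $\mathbb{Z}_2$-degree $1$; together with $L_0$, which is infinite and contributes to $\mathbb{Z}_2$-degree $0$, this yields a homogeneous $\mathbb{Z}_2$-grading on $E$ with both homogeneous parts containing infinitely many generators. By the classification of homogeneous $\mathbb{Z}_2$-gradings on $E$ recalled after Definition \ref{deantedez2} (see \cite{disil}), this grading is isomorphic, as a $\mathbb{Z}_2$-graded algebra, to $E_\infty$, so its $T_{\mathbb{Z}_2}$-ideal is exactly $I_\infty$.

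Combining (i) and (ii), Proposition \ref{Hcentral geral-prop} gives
\[
T_\mathbb{Z}\bigl(E_{(-b,0,a)}^{(\infty,\infty,\infty)}\bigr)=\{f\in F\langle X|\mathbb{Z}\rangle \mid \pi_2(f)\in I_\infty\}=T_\mathbb{Z}\bigl(\mathfrak{V}_{(-1,0,2)}^{(\infty,k,\infty)}\bigr),
\]
which is the first claim. For the ``in particular'' part, assume $ab$ is even; Theorem \ref{r1r2r3=0}(3) shows that $E_{(-b,0,a)}^{(\infty,k,\infty)}$ also generates $\mathfrak{V}_{(-1,0,2)}^{(\infty,k,\infty)}$, so the two algebras share the same $T_\mathbb{Z}$-ideal and are PI-equivalent. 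The main (minor) obstacle is step (i), specifically the production of the required countable family of even-length monomials with pairwise disjoint supports in each even homogeneous component; but the infinite supply in all three $L_{r_j}^{v_j}$'s makes this essentially bookkeeping, and essentially every other ingredient has already been established in the earlier results.
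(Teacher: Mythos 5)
Your proof is correct and follows exactly the route the paper intends: the corollary is stated without proof as an immediate consequence of Theorem \ref{r1r2r3=0}, and your argument reproduces that theorem's method (verify $2$-centrality, identify the induced $\mathbb{Z}_2$-grading as $E_\infty$ because $L_0$ is now infinite-dimensional and at least one of $a$, $b$ is odd, then apply Proposition \ref{Hcentral geral-prop}). The details you supply for the even-length disjoint-support monomials and for the comparison with Theorem \ref{r1r2r3=0}(3) are the right ones.
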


Therefore if $k_{1}<k_{2}$ are positive integers then $E_{(-1, 0, 1)}^{(\infty, k_{1},\infty )}$ is $\mathbb{Z}$-isomorphic to a homogeneous subalgebra of $E_{(-1, 0, 1)}^{(\infty, k_{2},\infty )}$. 

Taking into account Theorem \ref{r1r2r3=0}, given positive integers $k_{1}<k_{2}<\ldots$, it follows that
\[
T_{\mathbb{Z}}(\mathfrak{V}_{(-1, 0, 1)}^{(\infty, k_{1},\infty )})\supset T_{\mathbb{Z}}(\mathfrak{V}_{(-1, 0, 1)}^{(\infty, k_{2},\infty )})\supset\ldots.
\]
Therefore the $3$-induced $\mathbb{Z}$-gradings of full support on $E$ provide infinitely many varieties of $\mathbb{Z}$-graded algebras. This is in sharp contrast with the $2$-induced $\mathbb{Z}$-gradings of full support, as can be seen in \cite{alan-brandao -claud}.

Let $\mathfrak{V}_{can}$ be the variety of $\mathbb{Z}$-graded algebras defined by the graded identities

	\begin{enumerate} 
		\item	$[x_{1}, x_{2}]$, if either $\|x_{1}\|$ or $\|x_{2}\|$ is even and
		\item $x_{1}x_{2}+x_{2}x_{1}$, if both $\|x_{1}\|$ and $\|x_{2}\|$ are odd.
	\end{enumerate}

Notice that the variety of $\mathbb{Z}$-graded algebras $\mathfrak{V}_{can}$ can be interpreted as
\[
\{f\in F\langle X|\mathbb{Z}\rangle\mid \pi_{2}(f)\in T_2(E_{can})\}.
\]

\begin{theorem}\label{r1r2r3 dif 0}
	Assuming $\gcd(a,b)=1$ and $c\neq 0$, we have that 
	\begin{enumerate}
		\item [(1)] $E_{(-b, c, a)}^{(\infty, k,\infty )}$ is a $2$-central $\mathbb{Z}$-grading. 
		\item [(2)] $E_{(-b, c, a)}^{(\infty, k,\infty )}$ generates
		\begin{enumerate}
			\item[(i)] the variety $\mathfrak{V}_{(-1, 0, 1)}^{(\infty, k,\infty )}$, if the product $a b$ is odd and $c$ is even;
			\item[(ii)] the variety $\mathfrak{V}_{(-1, 0, 2)}^{(\infty, k,\infty )}$, if $a b$ is even;
			\item[(iii)] the variety $\mathfrak{V}_{can}$, if $a b c$ is odd.
		\end{enumerate}
\item [(3)] $E_{(-b, c, a)}^{(\infty, \infty,\infty )}$ generates
	\begin{enumerate}
		\item[(i)] the variety $\mathfrak{V}_{(-1, 0, 2)}^{(\infty, k,\infty )}$, if the product $a b c$ is even;
		\item[(ii)] the variety $\mathfrak{V}_{can}$, if $a b c$ is odd.	
	\end{enumerate}
\end{enumerate}
\end{theorem}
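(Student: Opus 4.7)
My plan is to mirror the proof strategy of Theorem \ref{r1r2r3=0}: establish $2$-centrality, identify the $\mathbb{Z}_2$-grading induced by the quotient $\mathbb{Z}/2\mathbb{Z}$, and then apply Proposition \ref{Hcentral geral-prop} to pull back the graded identities of the classified $\mathbb{Z}_2$-gradings $E_k$, $E_\infty$, and $E_{can}$. Compared to the case $c=0$ treated in Theorem \ref{r1r2r3=0}, the main new subtlety is a parity bookkeeping argument for $2$-centrality when $c \neq 0$.

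For part (1), full support follows from Theorem \ref{caracteriz particular}: since $\gcd(a,b)=1$ we have $S_{(-b,a)}^{(\infty,\infty)}=\mathbb{Z}$ (item (A) there). To verify $2$-centrality, I fix an even $h$ and look for non-negative $\alpha,\gamma$ with $\gamma a - \alpha b = h$ and $\alpha + \gamma$ even; then the choice $\beta = 0$ yields a monomial of even length and $\mathbb{Z}$-degree $h$, and infinitely many such monomials with pairwise disjoint supports can be drawn from the infinite sets $L^\infty_{-b}$ and $L^\infty_a$. The general solution of the Diophantine equation is $(\alpha,\gamma)=(\alpha_0+at,\gamma_0+bt)$, so the parity of $\alpha+\gamma$ shifts by $a+b$ as $t$ varies. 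If $a+b$ is odd I can toggle this parity freely; if $a+b$ is even then $\gcd(a,b)=1$ forces $a\equiv b\equiv 1\pmod 2$, whence $\alpha+\gamma\equiv \gamma a-\alpha b = h\equiv 0\pmod 2$ automatically. Either way, taking $t$ large makes $\alpha,\gamma\geq 0$, and letting $t$ range produces infinitely many representations.

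For part (2), I perform a direct parity analysis of the three lower indices to identify the induced $\mathbb{Z}_2$-grading. The $\mathbb{Z}_2$-degrees of the generators in $L^\infty_{-b}$, $L^k_c$, $L^\infty_a$ are $(\overline{b},\overline{c},\overline{a})\in\mathbb{Z}_2^3$: in (i) $ab$ odd and $c$ even gives $(1,0,1)$, so exactly $k$ generators carry degree $0$ and infinitely many carry degree $1$, producing $E_k$; in (ii) $ab$ even together with $\gcd(a,b)=1$ forces exactly one of $a,b$ to be even, and a short case check shows that in every subcase each $\mathbb{Z}_2$-homogeneous component receives infinitely many generators, hence $E_\infty$; in (iii) $abc$ odd places every generator in $\mathbb{Z}_2$-degree $1$, hence $E_{can}$. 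Proposition \ref{Hcentral geral-prop} with $H=2\mathbb{Z}$ then identifies $T_{\mathbb{Z}}(E_{(-b,c,a)}^{(\infty,k,\infty)})$ with $\pi_2^{-1}(I_k)$, $\pi_2^{-1}(I_\infty)$, or $\pi_2^{-1}(T_2(E_{can}))$, i.e.\ the defining ideals of $\mathfrak{V}_{(-1,0,1)}^{(\infty,k,\infty)}$, $\mathfrak{V}_{(-1,0,2)}^{(\infty,k,\infty)}$, and $\mathfrak{V}_{can}$ respectively.

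Part (3) runs along the same lines with $v_2=\infty$. When $abc$ is even, $\gcd(a,b)=1$ ensures that not both of $a,b$ are even while at least one of $a,b,c$ is even, so once again both $\mathbb{Z}_2$-components receive infinitely many generators, yielding $E_\infty$ and hence $\mathfrak{V}_{(-1,0,2)}^{(\infty,k,\infty)}$; when $abc$ is odd, every generator has $\mathbb{Z}_2$-degree $1$ and we recover $E_{can}$ and $\mathfrak{V}_{can}$. I expect the main obstacle to be the $2$-centrality check in part (1), in particular the parity-shift argument in $t$ when $a+b$ is even; once this is in place, identifying the induced $\mathbb{Z}_2$-gradings and invoking Proposition \ref{Hcentral geral-prop} are essentially routine.
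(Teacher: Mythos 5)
Your proposal is correct and follows essentially the same route as the paper: establish $2$-centrality, read off the induced $\mathbb{Z}_2$-grading ($E_k$, $E_\infty$, or $E_{can}$) from the parities of $-b$, $c$, $a$, and conclude via Proposition \ref{Hcentral geral-prop}. The only difference is in part (1), where you verify $2$-centrality directly by a Diophantine parity argument, while the paper simply cites the fact from \cite{alan-brandao -claud} that every $2$-induced $\mathbb{Z}$-grading of full support is $2$-central and observes that $E_{(-b,a)}^{(\infty,\infty)}$ is a graded subalgebra of $E_{(-b,c,a)}^{(\infty,k,\infty)}$; your argument is in effect a self-contained proof of that cited fact.
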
 
\begin{proof}
Let us prove the first statement. It is known that all 2-induced $\mathbb{Z}$-gradings of full support are $2$-central, see \cite{alan-brandao -claud}. Since  $E_{(-b, a)}^{(\infty,\infty )}$ is a  graded subalgebra of $E_{(-b, c, a)}^{(\infty, k,\infty )}$, the result follows. 

For the second statement, according to the first one, $E_{(-b, c, a)}^{(\infty, k,\infty )}$ is a $2$-central $\mathbb{Z}$-grading. Hence, if $a b$ is odd, there are two possibilities: i)  if $c$ is even then the structure $E_{(-b, c, a)}^{(\infty, k,\infty )}$ induces the $\mathbb{Z}_{2}$-grading $E_{k}$; and ii) if $c$ is odd then the structure $E_{(-b, c, a)}^{(\infty, k,\infty )}$ induces the $\mathbb{Z}_{2}$-grading $E_{can}$. Similarly to Theorem \ref{r1r2r3=0}, the result follows applying Proposition \ref{Hcentral geral-prop}. The remaining cases are treated in a similar way.  
\end{proof}

Theorems \ref{r1r2r3=0} and \ref{r1r2r3 dif 0} give us a complete description of the graded identities for the gradings in case $(A)$.

\subsection{Case $(B)$}

Now we shall deal with the structures presented in item (B) of Theorem \ref{caracteriz particular}. Recall that $a$ and $b$ denote positive integers, and $c$ is an integer satisfying $-b<c<a$. 

\begin{definition}\label{deffiel}
	We say that the set $\{a, b, c\}\subset\mathbb{Z}$ satisfies the condition $\hat{B}$ if
	\begin{itemize}
		\item $\gcd(a,b,c)=1$ and
		\item $\gcd(a,b)$, $\gcd(a,c)$, $\gcd(b,c)\neq 1$.
	\end{itemize}
\end{definition}

\begin{remark}\label{cnaozero}
	Assume that $k\geq d-1$, where $d=\gcd(a,b)$. 
	\begin{itemize}
	\item The set $\{a, b, c\}\subset\mathbb{Z}$ satisfies $\hat{B}$ if and only if $E_{(-b, c, a)}^{(\infty, k,\infty )}$ is as in case $(B)$ of Theorem~\ref{caracteriz particular}.
	\item If $\{a, b, c\}\subset\mathbb{Z}$ satisfies $\hat{B}$ , then $0\notin \{a, b, c\}$.
		\end{itemize} 
\end{remark}

According to Theorem \ref{caracteriz particular}, if $\{a, b, c\}\subset\mathbb{Z}$ satisfies $\hat{B}$, and $k\geq d -1$, then the $\mathbb{Z}$-graded algebra $E_{(-b, c, a)}^{(\infty, k,\infty )}$ is of full support. There are two possibilities that we must take into consideration, namely either $k=\infty$ or $k<\infty$. We first consider the case $k=\infty$.

\begin{lema}\label{muitos de mesmo length}
	Suppose that $E_{(-b, c, a)}^{(\infty,\infty,\infty )}=\oplus_{r\in\mathbb{Z}}A_r$ is of full support. Given a monomial $w\in A_r$, there exist infinitely many monomials with pairwise disjoint supports in $A_{r}$ whose length is equal to the length of $w$.
\end{lema}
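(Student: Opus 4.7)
The strategy is to exploit the hypothesis $v_{1}=v_{2}=v_{3}=\infty$, i.e.\ that each of $L^{\infty}_{-b}$, $L^{\infty}_{c}$, $L^{\infty}_{a}$ has an infinite basis of generators, and to reproduce the ``type profile'' of $w$ using fresh generators. No number-theoretic input is needed beyond what is already built into the hypothesis of full support, because the target degree $r$ is already realised by $w$ itself; the task is purely a bookkeeping/infinite-dimensionality one.

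Concretely, I would first write $w=e_{i_1}\cdots e_{i_n}$ and let $\alpha$, $\beta$, $\gamma$ denote the number of indices $j$ for which $e_{i_j}$ lies in $L^{\infty}_{-b}$, $L^{\infty}_{c}$, $L^{\infty}_{a}$, respectively. Then
\[
|w|=n=\alpha+\beta+\gamma, \qquad r=\|w\|=-\alpha b+\beta c+\gamma a.
\]
The key observation is that \emph{any} monomial formed by choosing $\alpha$ generators from $L^{\infty}_{-b}$, $\beta$ from $L^{\infty}_{c}$, and $\gamma$ from $L^{\infty}_{a}$, and multiplying them in increasing order of their indices, has length $n$ and $\mathbb{Z}$-degree $-\alpha b+\beta c+\gamma a=r$, so it belongs to $A_r$.

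Because each of $L^{\infty}_{-b}$, $L^{\infty}_{c}$, $L^{\infty}_{a}$ contains infinitely many basis generators, I can recursively pick pairwise disjoint ``batches'' $B_{1},B_{2},\ldots$ of generators lying outside $\mathrm{supp}(w)$, each $B_{k}$ consisting of exactly $\alpha$ generators from $L^{\infty}_{-b}$, $\beta$ from $L^{\infty}_{c}$, and $\gamma$ from $L^{\infty}_{a}$ (at each step only finitely many generators have been used, so infinitely many remain in each of the three infinite bases). Let $w_{k}$ be the product, in increasing index order, of the generators in $B_{k}$. Then each $w_{k}\in A_r$ has length $n$, and the $w_{k}$ have pairwise disjoint supports by construction. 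The only mild case-check is the possibility that some of $\alpha,\beta,\gamma$ vanish, in which case no generator of that type is required and the construction simplifies; this is not really an obstacle, just a degenerate subcase. I do not expect any serious difficulty in this argument.
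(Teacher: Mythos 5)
Your proposal is correct and follows essentially the same route as the paper's own proof: both arguments record the number of generators of $w$ coming from each of $L^{\infty}_{-b}$, $L^{\infty}_{c}$, $L^{\infty}_{a}$ and then replicate that type profile with fresh generators drawn from the infinite bases, which automatically preserves both the length and the $\mathbb{Z}$-degree. Your write-up is, if anything, slightly more explicit than the paper's about extracting pairwise disjoint batches, but the underlying idea is identical.
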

\begin{proof}
We can assume, without loss of generality, that
\begin{align*}
L_{-b}^\infty&=span_F\{e_{i}\mid i\equiv 0\pmod{3}\},\\
L_{a}^\infty&=span_F\{e_{i}\mid i\equiv 1\pmod{3}\},\\
L_{c}^\infty&=span_F\{e_{i}\mid i\equiv 2\pmod{3}\},
\end{align*}
	and 
	$w = e_{i_1}\cdots e_{i_l}  e_{j_1}\cdots e_{j_q}e_{h_1}\cdots e_{h_s}\in A_r$, where $e_{i_t}\in L_{-b}^\infty$, $e_{j_u}\in L_{a}^\infty$, and $e_{h_v}\in L_{c}^\infty$, for $1\leq t\leq l$, $1\leq u\leq q$, and $1\leq v\leq s$. Hence for all choices of $l$ elements in $L_{-b}^\infty$, $q$ elements in $L_{a}^\infty$ and $s$ elements in $L_{c}^\infty$, we obtain a monomial in $A_{r}$ whose length is equal to the length of $w$.
\end{proof}

	Assuming that $E_{(-b, c, a)}^{(\infty,\infty,\infty )}$ is of full support, then there exist $\alpha$, $\beta$, $\omega$, $\alpha^\prime$, $\beta^\prime$, and $\omega^\prime$ in $\mathbb{N}_0$ satisfying
\begin{align*}
1&=\alpha a+\beta(-b)+\omega c,\\
-1&=\alpha^\prime a+\beta^\prime(-b)+\omega^\prime c.
\end{align*}
Now we fix $\alpha$, $\beta$, $\omega$, $\alpha'$, $\beta'$ and $\omega'$ satisfying these equalities such that the sums $\alpha +\beta+\omega$ and $\alpha' +\beta'+\omega'$ are minimal. We have to take into account the following four cases:
\begin{enumerate}
	\item [(1)] Both $\alpha + \beta+\omega$ and $\alpha' + \beta'+\omega'$ are even.
	\item [(2)] $\alpha + \beta+\omega$ is even and $\alpha' + \beta'+\omega'$ is odd.
	\item [(3)] $\alpha + \beta+\omega$ is odd and $\alpha' + \beta'+\omega'$ is even.
	\item [(4)] Both $\alpha + \beta+\omega$ and $\alpha' + \beta'+\omega'$ are odd.
\end{enumerate}

\begin{lema}\label{centralinf}
Assume that $E_{(-b, c, a)}^{(\infty,\infty,\infty )}=\bigoplus_{r\in\mathbb{Z}}A_r$ is of full support. If $r$ is even, the component $A_{r}$ has infinitely many elements of $B_E$ of even length, and with pairwise disjoint supports. 
\end{lema}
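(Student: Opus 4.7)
The plan is to reduce the statement to Lemma \ref{muitos de mesmo length}: once we exhibit a single basis monomial $w_r\in A_r$ of positive even length, that lemma immediately supplies infinitely many basis monomials of the same length in $A_r$ with pairwise disjoint supports, and these are exactly the elements we want. So the task reduces to constructing, for each even $r\in\mathbb{Z}$, one basis monomial of positive even length in $A_r$.

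Recall that full support provides $\alpha,\beta,\omega,\alpha',\beta',\omega'\in\mathbb{N}_0$ with $1=\alpha a+\beta(-b)+\omega c$ and $-1=\alpha' a+\beta'(-b)+\omega' c$. For an even integer $r>0$, multiplying the first relation by $r$ gives $r=(r\alpha)a+(r\beta)(-b)+(r\omega)c$; one can pick $r\alpha$ distinct generators of $L_a^\infty$, $r\beta$ distinct generators of $L_{-b}^\infty$, and $r\omega$ distinct generators of $L_c^\infty$, which is possible because each of the three homogeneous components of $L$ is infinite-dimensional. Their ordered product (with increasing indices) is a basis monomial $w_r\in A_r$ of length $r(\alpha+\beta+\omega)$, which is positive (since $(\alpha,\beta,\omega)\neq(0,0,0)$) and even (since $r$ is). For even $r<0$ the construction is symmetric, using $(\alpha',\beta',\omega')$ and multiplying by $|r|$.

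The case $r=0$ requires a small adjustment, since $r(\alpha+\beta+\omega)=0$ would produce only the empty monomial. Instead, we use $0=2\cdot 1+2\cdot(-1)=2(\alpha+\alpha')a+2(\beta+\beta')(-b)+2(\omega+\omega')c$ to construct a basis monomial $w_0\in A_0$ of length $2(\alpha+\beta+\omega+\alpha'+\beta'+\omega')$, which is again positive and even. Applying Lemma \ref{muitos de mesmo length} to each $w_r$ then produces the desired infinite family. The argument is almost entirely mechanical; the only mild subtlety is the separate handling of $r=0$ to avoid the empty monomial, and the worth noting that the four-case parity analysis preceding the lemma plays no role here, because $r$ being even already absorbs any parity issue in the length of $w_r$.
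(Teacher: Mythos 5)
Your proof is correct, and it takes a noticeably more direct route than the paper. The paper organizes its argument around the four parity cases $(1)$--$(4)$ for $\alpha+\beta+\omega$ and $\alpha'+\beta'+\omega'$: in cases $(1)$ and $(4)$ it forms products $A_{\pm1}\cdots A_{\pm1}$ ($|r|$ factors) and $A_{-1}A_1\subset A_0$, while in cases $(2)$ and $(3)$ it first shows $A_0$ contains even- and odd-length monomials and then corrects the parity of an arbitrary $w\in A_r$ by multiplying by $A_0$, invoking Lemma \ref{muitos de mesmo length} at the end. You bypass the case analysis entirely by observing that scaling the Bézout-type representation of $\pm1$ by the even integer $|r|$ forces the length $|r|(\alpha+\beta+\omega)$ (resp.\ $|r|(\alpha'+\beta'+\omega')$) to be even regardless of which parity case holds, and then you let Lemma \ref{muitos de mesme length} do all the remaining work; your separate treatment of $r=0$ (doubling $1+(-1)$ to keep the length positive and even) is exactly the right fix, and the paper handles the same point via $A_{-1}A_1\subset A_0$ or via $A_0$'s mixed parities. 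What your shortcut gives up is only context: the four-case framework the paper sets up here is reused immediately afterwards (in Propositions \ref{lemas juntos} and \ref{caso (4) d central}, where $d$-central versus $2d$-central genuinely depends on those parities), and in cases $(1)$--$(3)$ the paper's argument in fact yields even-length monomials in \emph{every} component, not just the even-degree ones. For the lemma as stated, your argument is complete and arguably cleaner.
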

\begin{proof}
First we consider the $\mathbb{Z}$-grading on $E$ of case $(1)$. 	As $\alpha +\beta+\omega$ is even, there exist infinitely many monomials of even length with pairwise disjoint supports in $A_{1}$, which is also true for $A_{-1}$. Hence if $r\neq 0$ is even, we have either $\underbrace{A_1A_1\cdots A_1}_{|r|}\subseteq A_r$ or $\underbrace{A_{-1}A_{-1}\cdots A_{-1}}_{|r|}\subseteq A_r$. Moreover, $A_{-1}A_{1}\subset A_{0}$, thus the result follows. The same argument can be applied in the case $(4)$.

Now, we consider the case $(2)$. As $\alpha +\beta+\omega$ is even, there exist infinitely many monomials of even length with pairwise disjoint supports in $A_{1}$; also there exist infinitely many monomials of odd length, and with pairwise disjoint supports in $A_{-1}$. It follows that $A_{0}$ has infinitely many monomials of even and of odd lengths with pairwise disjoint supports. Now given a monomial $w\in A_{r}$, due to Lemma \ref{muitos de mesmo length}, we have that $A_{r}$ has infinitely many monomials with pairwise disjoint supports and length equal to that of $w$. Since $wA_{0}\subset A_r$, the last statement follows. Case $(3)$ is dealt with using a similar argument.
\end{proof}

\begin{theorem}
	Assume $E_{(-b, c, a)}^{(\infty,\infty,\infty )}=\bigoplus_{r\in\mathbb{Z}}A_r$ is of full support. Then
	\begin{enumerate}
		\item [(1)] $E_{(-b, c, a)}^{(\infty, \infty,\infty )}$ is a $2$-central $\mathbb{Z}$-grading. 
		\item [(2)] $E_{(-b, c, a)}^{(\infty, \infty,\infty )}$ generates
			\begin{enumerate}
			\item[(i)] the variety $\mathfrak{V}_{(-1, 0, 2)}^{(\infty, k,\infty )}$, if $a b c$  is even;
			\item[(ii)] the variety $\mathfrak{V}_{can}$, if $a b c$ is odd.
		\end{enumerate}
	\end{enumerate}
\end{theorem}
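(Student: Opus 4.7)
The plan is to reduce both parts of the statement to tools already established in Section~3, specifically Lemma \ref{centralinf} and Proposition \ref{Hcentral geral-prop}, taking $H=\langle 2\rangle=2\mathbb{Z}$. For part~(1), I would verify in turn the three defining properties of a $2$-central $\mathbb{Z}$-grading. The grading is homogeneous by construction and is of full support by hypothesis; the only remaining condition is that, for every even integer $h$, the component $A_h$ contains infinitely many monomials of even length with pairwise disjoint supports. But this is precisely Lemma \ref{centralinf}, so part~(1) follows with essentially no additional work.

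For part~(2), having part~(1), I would invoke Proposition \ref{Hcentral geral-prop} with $G=\mathbb{Z}$ and $H=2\mathbb{Z}$, obtaining
\[
T_{\mathbb{Z}}(E_{(-b,c,a)}^{(\infty,\infty,\infty)})=\{f\in F\langle X|\mathbb{Z}\rangle\mid \pi_2(f)\in T_2(B)\},
\]
where $B$ is the $\mathbb{Z}_2$-graded algebra obtained from $E_{(-b,c,a)}^{(\infty,\infty,\infty)}$ by reducing $\mathbb{Z}$-degrees modulo~$2$. The $\mathbb{Z}_2$-degrees of the generators in the three components of $L$ are $\overline{-b}=\overline{b}$, $\overline{c}$, and $\overline{a}$, so $B$ is determined up to $\mathbb{Z}_2$-isomorphism by the parities of $a$, $b$, $c$. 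I would also recall that the first lemma of Section~3, together with the full support assumption, forces $\gcd(a,b,c)=1$.

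I would then split into the two subcases. If $abc$ is odd, then $a$, $b$, $c$ are all odd, so every generator of $L$ has $\mathbb{Z}_2$-degree $1$; hence $B=E_{can}$, giving $T_2(B)=T_2(E_{can})$, which is exactly the ideal defining $\mathfrak{V}_{can}$. If $abc$ is even then at least one of $a$, $b$, $c$ is even, and since $\gcd(a,b,c)=1$ at least one is odd; consequently $B$ has infinitely many homogeneous generators of each $\mathbb{Z}_2$-degree, and by the classification of the homogeneous $\mathbb{Z}_2$-gradings on $E$ in \cite{disil}, $B$ is $\mathbb{Z}_2$-isomorphic to $E_\infty$. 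Therefore $T_2(B)=I_\infty$, and the right-hand side of the displayed equality is the defining ideal of $\mathfrak{V}_{(-1,0,2)}^{(\infty,k,\infty)}$. The only genuine subtlety I expect is in case~(i): one must ensure that $B$ is $E_\infty$ rather than a degenerate variant such as $E_k$, and this is precisely what the combination $abc$ even together with $\gcd(a,b,c)=1$ guarantees; everything else is bookkeeping with the previously established propositions.
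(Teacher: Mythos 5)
Your proposal is correct and follows essentially the same route as the paper: part (1) via Lemma \ref{centralinf}, and part (2) by identifying the induced $\mathbb{Z}_{2}$-grading from the parities of $a$, $b$, $c$ (using $\gcd(a,b,c)=1$ to rule out all three being even) as $E_{can}$ or $E_{\infty}$, then applying Proposition \ref{Hcentral geral-prop} with $H=2\mathbb{Z}$. Your write-up is, if anything, slightly more explicit than the paper's in spelling out why full support forces $\gcd(a,b,c)=1$ and in checking the three conditions of $2$-centrality.
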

\begin{proof}
Lemma \ref{centralinf}  implies statement $(1)$, so it is enough to prove statement (2). If $a b c$ is even, it follows that in the set $\{a,b,c\}$ there is a pair $(r,s)$ such that one of $r$ and $s$ is even and the other is odd. Hence the induced $\mathbb{Z}_{2}$-grading is $E_{\infty}$. Now, if $a b c$ is odd, we have that $a$, $b$, and $c$ are odd integers, so the induced $\mathbb{Z}_{2}$-grading is $E_{can}$. Finally, in both cases we apply Proposition \ref{Hcentral geral-prop}. 
\end{proof}

Our aim is to complete the description of the graded identities of $E_{(-b, c, a)}^{(\infty, k,\infty )}$, now considering case (B) of Theorem \ref{caracteriz particular}, in which $k$ is finite. At first we develop   some results concerning the support of the $2$-induced grading $E_{(-b, a)}^{(\infty,\infty )}$, where $d=\gcd(a, b)$. By Theorem \ref{o caso dois indices} there exist $\alpha$, $\beta$, $\alpha '$ and $\beta '$ in $\mathbb{N}_{0}$ satisfying 
\[d=\alpha a + \beta (-b), \quad\textrm{ and }\quad -d=\alpha ' a + \beta ' (-b).\]
We fix $\alpha$, $\beta$, $\alpha'$  and $\beta'$ satisfying these equalities such that the sums $\alpha +\beta$ and $\alpha' +\beta'$ are the least possible.

We have to take into account the following four cases:
\begin{enumerate}
	\item [(1$^\prime$)] Both $\alpha + \beta$ and $\alpha' + \beta'$ are even.
	\item [(2$^\prime$)] $\alpha + \beta$ is even and $\alpha' + \beta'$ is odd.
	\item [(3$^\prime$)] $\alpha + \beta$ is odd and $\alpha' + \beta'$ is even.
	\item [(4$^\prime$)] Both $\alpha + \beta$ and $\alpha' + \beta'$ are odd.
\end{enumerate}
The statements below give us information about the structures $E_{(-b, c, a)}^{(\infty,k,\infty )}$ of full support.

\begin{proposition}\label{lemas juntos}
	Suppose that the set $\{a, b, c\}$ satisfies $\hat{B}$, let $d=\gcd(a, b)$ and $k\geq d-1$. If $E_{(-b, c, a)}^{(\infty,k,\infty )}$ is a $\mathbb{Z}$-graded algebra  of cases $(1^\prime)$, $(2^\prime)$, and $(3^\prime)$, then it is $d$-central. 
\end{proposition}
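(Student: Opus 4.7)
The plan is to verify the three defining properties of a $d$-central grading in order. Homogeneity is built into the construction of $E_{(-b,c,a)}^{(\infty,k,\infty)}$, and full support follows directly from case (B) of Theorem \ref{caracteriz particular}, since by hypothesis $\{a,b,c\}$ satisfies $\hat B$ and $k \geq d-1$. All the real work lies in the third bullet: for every $h \in \langle d\rangle$, exhibit infinitely many monomials in $A_h$ of even length with pairwise disjoint supports.

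The key idea is to build these monomials using only generators from $L_{-b}^\infty \cup L_a^\infty$, never touching $L_c^k$. This is possible because $d=\gcd(a,b)$ yields $\langle d\rangle = S_{(-b,a)}^{(\infty,\infty)}$ by Theorem \ref{o caso dois indices}, and since both $L_{-b}^\infty$ and $L_a^\infty$ are infinite dimensional we obtain unbounded freedom to select fresh disjoint supports. Fix the minimal non-negative combinations $d = \alpha a - \beta b$ and $-d = \alpha' a - \beta' b$ whose parities govern the cases, and choose atomic monomials $u \in A_d$ of length $\alpha+\beta$ and $u' \in A_{-d}$ of length $\alpha'+\beta'$, both drawn from $L_{-b}^\infty \cup L_a^\infty$.

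For a general $h = md \in \langle d\rangle$ I would assemble elements of $A_h$ as disjoint products of copies of $u$, $u'$, or their squares. In case $(1')$ both $|u|$ and $|u'|$ are even, so the construction is immediate: take $m$ disjoint copies of $u$ if $m>0$, $|m|$ disjoint copies of $u'$ if $m<0$, and $uu'$ if $m=0$, always with pairwise disjoint supports. In case $(2')$, $|u|$ is even but $|u'|$ is odd, so the square $u'u' \in A_{-2d}$ has even length; for $m\geq 0$ use copies of $u$, for negative even $m$ use copies of $u'u'$, for negative odd $m$ take $(u'u')^{(|m|+1)/2}\cdot u$, and for $m=0$ take $uu\cdot u'u'$. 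Case $(3')$ is handled symmetrically by swapping the roles of $u$ and $u'$. At each stage, fresh generators drawn disjointly from the infinite sets $L_{-b}^\infty$ and $L_a^\infty$ yield infinitely many such monomials with pairwise disjoint supports in $A_h$.

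The main (and only) delicate step is the case analysis in $(2')$ and $(3')$ for negative $m$ of the ``wrong'' parity, where copies of $u'$ cannot be used directly and one must pair them into $u'u'$ and then patch with an extra copy of $u$ to land in the correct component while preserving even total length. Everything else reduces to the trivial observation that a product of homogeneous monomials with pairwise disjoint supports is again homogeneous, with degree the sum of the degrees and length the sum of the lengths. Note that the hypothesis $k\geq d-1$ is only invoked to guarantee full support via Theorem \ref{caracteriz particular}; no $c$-generator ever enters the construction of the even-length atoms, which also explains why case $(4')$ must be excluded from the statement and treated by a different route.
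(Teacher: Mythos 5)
Your proof is correct and follows essentially the same strategy as the paper: reduce everything to the parity of the minimal representations $d=\alpha a-\beta b$ and $-d=\alpha' a-\beta' b$, and manufacture even-length elements of each $A_{md}$ as products of pairwise disjoint copies of atoms $u\in A_d$ and $u'\in A_{-d}$. Where you differ is in the execution of cases $(2')$ and $(3')$: the paper routes through $A_0$ (which acquires monomials of both parities) and then argues componentwise via $A_0A_r\subset A_r$, whereas you patch the parity directly with $u'u'$ and one extra copy of $u$. Your version has the advantage of being entirely self-contained and of drawing every generator from the infinite-dimensional spaces $L_{-b}^{\infty}\cup L_a^{\infty}$, which makes the ``infinitely many pairwise disjoint copies'' claim immediate; the paper's phrasing in case $(2')$ (replicating an arbitrary even-length monomial of $A_r$, which could a priori involve the finite-dimensional $L_c^k$) requires the reader to supply exactly the observation you make explicit. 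Your closing remarks on the role of $k\geq d-1$ and on why $(4')$ is excluded are also accurate.
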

\begin{proof}
	Let $E_{(-b, c, a)}^{(\infty,k,\infty )}=\bigoplus_{r\in\mathbb{Z}}A_r$. Assuming case (1$^\prime$), it follows that both $A_{d}$ and $A_{-d}$ have infinitely many monomials of even length with pairwise disjoint supports. Hence the same holds for every component $A_{r}$, where $r\in\langle d\rangle$.
	
	Assuming now case (2$^\prime$), we conclude that $A_{0}$ has infinitely many monomials of even and odd lengths with pairwise disjoint supports. Given $r\in\langle d\rangle$, if $A_{r}$ has monomials of even length, then it has infinitely many monomials of even length with pairwise disjoint supports. Otherwise, as  $A_{0}A_{r}\subset A_{r}$, we use the monomials of odd length in  $A_{0}$ to obtain the same conclusion. Case (3$^\prime$) is dealt with similarly.
\end{proof}

\begin{proposition}\label{caso (4) d central}
	Suppose that the set $\{a, b, c\}$ satisfies $\hat{B}$, let $d=\gcd(a, b)$, and $k\geq d-1$. If $E_{(-b, c, a)}^{(\infty,k,\infty )}$ is a $\mathbb{Z}$-graded algebra  as in case $(4^\prime)$, then it is $2d$-central. 
\end{proposition}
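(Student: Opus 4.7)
The plan is to verify the three defining properties of a $2d$-central grading: homogeneity, full support, and the existence, in every component $A_h$ with $h\in\langle 2d\rangle$, of infinitely many monomials of even length with pairwise disjoint supports. Homogeneity is immediate from the construction of $E_{(-b,c,a)}^{(\infty,k,\infty)}$, and full support follows from Theorem \ref{caracteriz particular}(B): the hypothesis $\hat{B}$ forces $c\notin\langle d\rangle$ (otherwise $d$ would divide $\gcd(a,b,c)=1$), and by assumption $k\geq d-1$.

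The heart of the argument is the third property. I would work inside the graded subalgebra $E_{(-b,a)}^{(\infty,\infty)}\hookrightarrow E_{(-b,c,a)}^{(\infty,k,\infty)}$, whose support is $\langle d\rangle$. Fix a monomial $w^+\in A_d$ built from $\alpha$ generators of degree $a$ and $\beta$ generators of degree $-b$, where $\alpha+\beta$ is the minimal sum for $d=\alpha a+\beta(-b)$; similarly fix $w^-\in A_{-d}$ of length $\alpha'+\beta'$. In case $(4')$, both $\alpha+\beta$ and $\alpha'+\beta'$ are odd integers. Now for an arbitrary $h=2dm\in\langle 2d\rangle$, I would construct a family of monomials of degree $h$ as follows:
\begin{itemize}
\item If $m>0$: take $2m$ copies of the pattern of $w^+$, each using a fresh, pairwise disjoint choice of $\alpha$ generators from $L_a^\infty$ and $\beta$ from $L_{-b}^\infty$. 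The resulting product lies in $A_{2md}=A_h$ and has length $2m(\alpha+\beta)$, which is even.
\item If $m<0$: do the same with $2|m|$ copies of the $w^-$ pattern, producing a monomial of degree $h$ and even length $2|m|(\alpha'+\beta')$.
\item If $m=0$: concatenate one copy of a $w^+$ pattern and one copy of a $w^-$ pattern on disjoint supports; the result has degree $0$ and length $(\alpha+\beta)+(\alpha'+\beta')$, which is odd $+$ odd $=$ even.
\end{itemize}
Because $L_{-b}^\infty$ and $L_a^\infty$ each contain infinitely many basis vectors, one can iterate each construction using disjoint blocks of generators, obtaining infinitely many such monomials in $A_h$ with pairwise disjoint supports.

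The main thing to keep straight (rather than a genuine obstacle) is that the degree-$c$ generators are irrelevant here: only the 2-induced subalgebra $E_{(-b,a)}^{(\infty,\infty)}$ supplies the infinite reservoir we need, and the odd-length parity in case $(4')$ is precisely what guarantees that doubling (or combining one $+$ and one $-$ block) produces even length. Combining the three items above yields the $2d$-centrality; note that one cannot in general strengthen this to $d$-centrality, because reaching an odd multiple of $d$ would require an odd number of odd-length blocks, giving odd total length.
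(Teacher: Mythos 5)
Your proposal is correct and follows essentially the same route as the paper: both arguments exploit the fact that in case $(4^\prime)$ the components $A_{d}$ and $A_{-d}$ contain infinitely many odd-length monomials on disjoint supports drawn from the $2$-induced subalgebra $E_{(-b,a)}^{(\infty,\infty)}$, and then obtain even-length monomials in $A_{2dm}$ by multiplying an even number of such blocks (respectively one block from each of $A_{d}$ and $A_{-d}$ when $m=0$). Your write-up merely makes explicit the homogeneity and full-support checks and the parity bookkeeping that the paper leaves implicit.
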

\begin{proof}
	Let $E_{(-b, c, a)}^{(\infty,k,\infty )}=\bigoplus_{r\in\mathbb{Z}}A_r$. As both $A_{-d}$ and $A_{d}$ contain infinitely many monomials of odd length, given an even integer $n\neq 0$, we conclude that $\underbrace{A_dA_d\cdots A_d}_{|n|}\subseteq A_{nd}$ or $\underbrace{A_{-d}A_{-d}\cdots A_{-d}}_{|n|}\subseteq A_{nd}$, and $A_{-d}A_{d}\subset A_{0}$. 
\end{proof}

\begin{corollary}
	Suppose that $\{a, b, c\}$ satisfies $\hat{B}$, let $d=\gcd(a, b)$, and $k\geq d-1$. Then there exists a non-zero subgroup $H\leq \mathbb{Z}$ such that $T_{\mathbb{Z}}(E_{(-b, c, a)}^{(\infty,k,\infty )})=\{F\langle X|\mathbb{Z}\rangle\mid \pi_{H}(f)\in I_{H}\}$, where $I_{H}$ is the $T_{\mathbb{Z}/H}$-ideal of the respective $\mathbb{Z}/H$-grading on $E$.
\end{corollary}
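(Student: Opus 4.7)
The plan is to reduce the corollary to a single application of Proposition \ref{Hcentral geral-prop} by selecting, for each admissible triple $(a,b,c)$, a nonzero subgroup $H \leq \mathbb{Z}$ such that $E_{(-b,c,a)}^{(\infty,k,\infty)}$ is $H$-central. Propositions \ref{lemas juntos} and \ref{caso (4) d central} already supply the required centrality; the content of the corollary is only to package them correctly.

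First I would verify that $E_{(-b,c,a)}^{(\infty,k,\infty)}$ has full support over $\mathbb{Z}$, which is built into the definition of $H$-centrality. The condition $\hat{B}$ gives $d := \gcd(a,b) \geq 2$ and $\gcd(a,b,c)=1$; Lemma \ref{basico mas útil} then yields $\gcd(d,c) = \gcd(a,b,c) = 1$, so $c \notin \langle d\rangle = S_{(-b,a)}^{(\infty,\infty)}$. Combined with $k \geq d-1$, this puts us in case $(B)$ of Theorem \ref{caracteriz particular}, hence $S_{(-b,c,a)}^{(\infty,k,\infty)} = \mathbb{Z}$.

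Next I would split the argument according to the four parity cases $(1')$--$(4')$ introduced before Proposition \ref{lemas juntos}, applied to the minimal non-negative representations of $\pm d$ in terms of $a$ and $-b$. In cases $(1')$, $(2')$, and $(3')$, Proposition \ref{lemas juntos} gives that the grading is $d$-central, so I would set $H = \langle d\rangle$. In case $(4')$, Proposition \ref{caso (4) d central} gives only $2d$-centrality, so I would set $H = \langle 2d\rangle$. Either choice is a nonzero subgroup, since $d \geq 2$, and the chosen $H$-centrality is precisely the hypothesis needed by Proposition \ref{Hcentral geral-prop}.

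Applying Proposition \ref{Hcentral geral-prop} with $G = \mathbb{Z}$ and $B = \bigoplus_{\overline{r} \in \mathbb{Z}/H} A_{\overline{r}}$ the induced quotient $\mathbb{Z}/H$-grading on $E$ then delivers
\[
T_{\mathbb{Z}}\bigl(E_{(-b,c,a)}^{(\infty,k,\infty)}\bigr) = \{\,f \in F\langle X|\mathbb{Z}\rangle \mid \pi_H(f) \in I_H\,\},
\]
with $I_H = T_{\mathbb{Z}/H}(B)$, which is the claimed identity. The only genuine subtlety is case $(4')$, where the tools at hand do not yield $d$-centrality and one is forced to enlarge $H$ to $\langle 2d\rangle$; this is precisely why the statement of the corollary merely asserts the existence of a suitable $H$ rather than identifying it as $\langle d\rangle$. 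Beyond keeping this case distinction straight, I do not anticipate any obstacle, since the heavy combinatorial content is fully encapsulated in the preceding propositions.
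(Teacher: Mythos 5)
Your argument is correct and is precisely the one the paper intends: the corollary is stated without proof as an immediate consequence of Propositions \ref{lemas juntos} and \ref{caso (4) d central} (giving $H=\langle d\rangle$ in cases $(1')$--$(3')$ and $H=\langle 2d\rangle$ in case $(4')$) together with Proposition \ref{Hcentral geral-prop}. Your preliminary verification of full support via Lemma \ref{basico mas útil} and case $(B)$ of Theorem \ref{caracteriz particular} is a welcome addition that the paper leaves implicit.
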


Actually, concerning the gradings of case (4$^\prime$), we can deduce somewhat more information.

\begin{lema}\label{m, n odd implies type 4}
	If $a$ and $b$ are integers such that $a/d$ and $b/d$ are odd integers, where $d=\gcd(a,b)$, then $E_{(-b, a)}^{(\infty, \infty)}$ is of case $(4)$.
\end{lema}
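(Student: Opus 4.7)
The plan is a direct parity computation on the Bezout identities that define the four cases. Write $a' = a/d$ and $b' = b/d$, so by hypothesis both $a'$ and $b'$ are odd and $\gcd(a', b') = 1$. The equations $d = \alpha a + \beta(-b)$ and $-d = \alpha' a + \beta'(-b)$ divide through by $d$ and become, respectively,
\[
\alpha a' - \beta b' = 1, \qquad \alpha' a' - \beta' b' = -1,
\]
to be solved with $\alpha, \beta, \alpha', \beta' \in \mathbb{N}_0$ and the sums $\alpha + \beta$, $\alpha' + \beta'$ as small as possible.

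The first key observation is that the parity of $\alpha + \beta$ is the same for \emph{every} non-negative solution of $\alpha a' - \beta b' = 1$. Indeed, any two such solutions differ by $(\alpha, \beta) \mapsto (\alpha + t b', \beta + t a')$ for some $t \in \mathbb{Z}$, so the sum changes by $t(a' + b')$; since $a'$ and $b'$ are both odd, $a' + b'$ is even, and the parity of $\alpha + \beta$ is invariant. Hence the minimal sum $\alpha + \beta$ has the same parity as $\alpha + \beta$ computed from any particular (even integer) solution.

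The second step pins down that common parity. Reducing the identity $\alpha a' - \beta b' = 1$ modulo $2$ and using that $a' \equiv b' \equiv 1 \pmod 2$ gives $\alpha - \beta \equiv 1 \pmod 2$, so $\alpha + \beta$ is odd. The very same argument applied to $\alpha' a' - \beta' b' = -1$ gives $\alpha' - \beta' \equiv 1 \pmod 2$, hence $\alpha' + \beta'$ is odd as well. Therefore both minimal sums are odd, which is exactly condition $(4')$, and the 2-induced $\mathbb{Z}$-grading $E_{(-b,a)}^{(\infty,\infty)}$ falls into case $(4)$.

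I do not expect a genuine obstacle here: the lemma reduces to a two-line parity check once one records the fact that odd $a'$ and odd $b'$ force $a' + b'$ to be even, so shifting between Bezout representatives preserves parity. The only point to be a little careful about is that the definition of the cases uses \emph{non-negative} representatives; this is harmless because the existence of some $(\alpha,\beta)\in\mathbb{N}_0^2$ with $\alpha a'-\beta b'=1$ is guaranteed by Theorem~\ref{o caso dois indices} applied to $E_{(-b,a)}^{(\infty,\infty)}$, which by Theorem~\ref{o caso dois indices} has support $\langle d\rangle$, and analogously for $-d$.
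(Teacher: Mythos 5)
Your proposal is correct and follows essentially the same route as the paper: reduce the Bezout identities $\alpha a' - \beta b' = 1$ and $\alpha' a' - \beta' b' = -1$ modulo $2$, use that $a'$ and $b'$ are odd to conclude $\alpha+\beta$ and $\alpha'+\beta'$ are both odd, hence case $(4')$ holds. Your additional observation that the parity of $\alpha+\beta$ is invariant across all solutions is correct but redundant, since the mod-$2$ reduction already applies directly to the minimal non-negative solution.
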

\begin{proof}
	The proof follows word by word that of \cite[Lemma 3.10]{alan-brandao -claud}. In fact, the equalities
	\[
	1=\alpha (a/d) + \beta (-b/d), \quad\textrm{ and }\quad -1=\alpha '(a/d) + \beta '(-b/d),
	\]
	imply that among $\alpha$ and $\beta$ one is even and the other is odd, and also among $\alpha'$ and $\beta'$. Then both $\alpha +\beta$ and $\alpha' +\beta'$ are odd, hence $E_{(-b, a)}^{(\infty, \infty)}$ is of type $(4)$.
\end{proof}

\begin{lema}\label{A in centro}
	Suppose that $\{a, b, c\}$ satisfies $\hat{B}$, $d=\gcd(a, b)$, and $k\geq d-1$. Assume that $E_{(-b,a)}^{(\infty,\infty )}$ is a $\mathbb{Z}$-graded algebra of case $(4^\prime)$ where $a/d$ and $b/d$ are odd integers. Moreover, let $E_{(-b,c, a)}^{(\infty,k, \infty)}= \oplus_{r\in\mathbb{Z}}A_{r}$ and $E_{can}=E_{(0)}\oplus E_{(1)}$. If $r=(2{l})d$ then $A_r\subset Z(E)$. If $r=(2l-1)d$ then $A_r\subseteq E_{(1)}$.
\end{lema}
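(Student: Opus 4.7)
The plan is to show the stronger parity identity $|w|\equiv m\pmod 2$ for every basis monomial $w\in A_r$, where $r=md$. Fix such a $w$ and let $l$, $q$, $s$ denote the numbers of its generating factors of $\mathbb{Z}$-degrees $-b$, $a$, $c$ respectively, so that $|w|=l+q+s$ and
\[
\|w\|=aq-bl+cs=r.
\]
Since $d\mid a$ and $d\mid b$, whereas $\gcd(d,c)=1$ by $\hat{B}$, reducing this equation modulo $d$ forces $d\mid s$. Write $s=ds'$ and divide through by $d$ to obtain $a'q-b'l+cs'=m$, where $a'=a/d$ and $b'=b/d$ are both odd by the case $(4')$ hypothesis.

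Reducing the latter equation modulo $2$ and using $a'\equiv b'\equiv 1\pmod 2$ gives $q+l+cs'\equiv m\pmod 2$, and therefore
\[
|w|=q+l+ds'\equiv m+(c+d)s'\pmod 2.
\]
Thus the lemma reduces to the parity identity $(c+d)s'\equiv 0\pmod 2$: once this is established, $r=(2l)d$ forces $|w|$ even, so $w\in E_{(0)}=Z(E)$, and $r=(2l-1)d$ forces $|w|$ odd, so $w\in E_{(1)}$. The inclusions $A_r\subseteq Z(E)$ and $A_r\subseteq E_{(1)}$ then follow by linearity on $A_r$, as it is spanned by such basis monomials.

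I expect the parity equation $(c+d)s'\equiv 0\pmod 2$ to be the main obstacle. Because $\gcd(c,d)=1$ forbids both being even, the generic instance has $c$ and $d$ both odd, in which case $c+d$ is even and the identity is immediate. When the parities of $c$ and $d$ disagree, one must combine the bound $k\geq d-1$ (which controls the admissible range of $s'$) with the parity structure of the minimal representations $\pm d=\alpha a-\beta b$ supplied by case $(4')$ to force $s'$ itself to be even. This part of the verification is a direct adaptation of the argument of \cite[Lemma 3.10]{alan-brandao -claud} to the present three-degree setting, where the extra contribution from generators of degree $c$ is tracked via the quotient $s'=s/d$ rather than $s$ itself.
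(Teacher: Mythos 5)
Your reduction is correct and, in fact, more careful than the paper's own proof: the paper only considers monomials $w=e_{i_1}\cdots e_{i_t}e_{j_1}\cdots e_{j_k}$ built from generators of degrees $a$ and $-b$, silently ignoring factors of degree $c$, whereas you track them, correctly deduce $d\mid s$ from $\gcd(c,d)=1$, and reduce the lemma to the congruence $(c+d)s'\equiv 0\pmod 2$. The gap is in your final step, and it cannot be filled as you propose. The hypothesis $k\geq d-1$ is a \emph{lower} bound on $\dim L_c$, so it does not constrain $s$ from above; as soon as $k\geq d$ the value $s=d$, i.e.\ $s'=1$, is realized by an actual basis monomial, and nothing in the parity structure of the minimal representations $\pm d=\alpha a-\beta b$ of case $(4')$ says anything about $s'$. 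Concretely, take $a=12$, $b=20$, $c=-15$: then $-b<c<a$, the set $\{a,b,c\}$ satisfies $\hat{B}$, $d=\gcd(a,b)=4$, and $a/d=3$, $b/d=5$ are odd, so all hypotheses hold; but $c+d=-11$ is odd. With $k=4\geq d-1$, the monomial $w$ equal to the product of four generators of degree $c$ lies in $A_{-60}$, and $-60=(2l-1)d$ with $2l-1=-15$, yet $|w|=4$ is even, so $w\in Z(E)$ and $w\notin E_{(1)}$. Thus the congruence you need is genuinely false in the mixed-parity case, and no adaptation of Lemma 3.10 of \cite{alan-brandao -claud} can rescue it.

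What your computation actually shows is that the argument closes in exactly two situations: when $c\equiv d\pmod 2$ (both odd, since $\gcd(c,d)=1$), so that $c+d$ is even; or when $s'=0$ is forced, which under the standing hypothesis $k\geq d-1$ means $k=d-1$ exactly. The paper's proof lives implicitly in the second situation by never admitting degree-$c$ factors into $w$; for $k\geq d$ with $c+d$ odd the statement of the lemma itself fails, and your identity $|w|\equiv m+(c+d)s'\pmod 2$ is the precise correction. So either add one of these hypotheses explicitly, or restate the conclusion in terms of $m+(c+d)s'$; as written, the last paragraph of your proof is a genuine gap rather than a routine verification.
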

\begin{proof}
	Let $r\geq 0$ be an integer. Let
	\[w=e_{i_1}\cdots e_{i_t}e_{j_1}\cdots e_{j_k}\]
	be a monomial of $E$. Assume that $e_{i_{1}}$, \ldots, $e_{i_{t}}$ have $\mathbb{Z}$-degree $a$, while $e_{j_{1}}$, \ldots, $e_{j_{k}}$ are of degree $-b$. For all $r\in\langle d \rangle$, we have 
	\[w\in A_{r}\quad \mbox{\textrm{ if and only if }}\quad r=ta+k(-b).\]
	If $r=(2{l})d$, since  $a/d$ and $b/d$ are odd  and $2l=t(a/d)+k(-b/d)$, we conclude that $t$ and $k$ have the same parity. Thus $t+k$ is even and $A_{r}\subset Z(E)$. If $r=(2{l}-1)d$, we conclude that $t$ and $k$ have different parities, thus $A_{r}\subset E_{(1)}$. The same argument works when we deal with $-r$.
\end{proof} 

Suppose now that the product $(a/d) (b/d)$ is even, and consider the structure $E_{(-b,c, a)}^{(\infty,k, \infty)}=\bigoplus_{r\in\mathbb{Z}}A_{r}$. Hence we have $(a/d)+(b/d)$ is an odd integer. Since 
\[0=a (b/d) + (-b) (a/d),\]
the component $A_{0}$ contains infinitely many monomials of odd length, namely of length $(a/d)+(b/d)$.

As $d=\alpha a +\beta (-b) $ and $-d=\alpha' a +\beta' (-b) $, the components $A_d$ and $A_{-d}$ contain monomials of odd length. 

Hence $A_{0}$ has infinitely many monomials of  even, and also of odd length, with pairwise disjoint supports. Therefore this also holds for each homogeneous component of degree $r\in\langle d\rangle$. In this way we proved the following lemma.

\begin{lema}\label{type4central}
	If $E_{(a, -b)}^{(\infty, \infty)}$ is of type $(4)$, and if the product $(a/d) (b/d)$ is even, then $E_{(-b, c,a)}^{(\infty,k, \infty)}$ is $d$-central.
\end{lema}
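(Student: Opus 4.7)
The plan is to verify the three defining conditions of $d$-centrality in turn. Homogeneity is immediate from the construction of $E_{(-b,c,a)}^{(\infty,k,\infty)}$, and full support is guaranteed by Theorem \ref{caracteriz particular} together with the hypotheses that $\{a,b,c\}$ satisfies $\hat{B}$ and $k\geq d-1$. Hence the only remaining task is to show that for every $r\in\langle d\rangle$, the homogeneous component $A_{r}$ contains infinitely many basis monomials of even length with pairwise disjoint supports. All of this will be achieved using only generators of degrees $a$ and $-b$, which is crucial because the space $L^{k}_{c}$ is finite-dimensional and cannot, by itself, supply infinitely many disjoint supports.

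The key step is to exhibit in $A_{0}$ monomials of \emph{both} parities with pairwise disjoint supports. Since $E_{(-b,a)}^{(\infty,\infty)}$ is of type $(4')$, the minimal expressions $d=\alpha a+\beta(-b)$ and $-d=\alpha'a+\beta'(-b)$ give monomials of odd length $\alpha+\beta$ in $A_{d}$ and of odd length $\alpha'+\beta'$ in $A_{-d}$; multiplying such monomials on disjoint basis elements yields infinitely many monomials of even length in $A_{0}$ with pairwise disjoint supports. Simultaneously, the hypothesis that $(a/d)(b/d)$ is even forces $(a/d)+(b/d)$ to be odd, so the identity $0=a(b/d)+(-b)(a/d)$ produces monomials of odd length $(a/d)+(b/d)$ in $A_{0}$; the infinite-dimensionality of $L^{\infty}_{a}$ and $L^{\infty}_{-b}$ guarantees infinitely many such monomials with pairwise disjoint supports (by the same reasoning as in Lemma \ref{muitos de mesmo length}, restricted to two of the three types of generators).

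Finally, I would propagate both parities from $A_{0}$ to an arbitrary $A_{r}$ with $r\in\langle d\rangle$. By full support there exists at least one basis monomial $w\in A_{r}$, say of length $\ell$; by the argument of Lemma \ref{muitos de mesmo length} (again applied only to the two infinite-dimensional summands, since $r\in\langle d\rangle$ can be realized by $a$- and $-b$-generators alone) $A_{r}$ already contains infinitely many monomials of length $\ell$ with pairwise disjoint supports. If $\ell$ is even we are done; if $\ell$ is odd, multiply each such $w$ by an odd-length monomial of $A_{0}$ chosen from the infinite family above on a fresh set of basis vectors, using $wA_{0}\subseteq A_{r}$ together with $\ell+\mathrm{odd}=\mathrm{even}$. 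A standard bookkeeping argument — picking the fresh basis elements greedily so that no basis vector is reused — ensures pairwise disjointness and infinite cardinality simultaneously. The main technical point, and the only place careful attention is needed, is precisely this bookkeeping: one must verify that the combined procedure (choosing both the $A_{0}$-factor and the $A_{r}$-factor from fresh generators) can be iterated indefinitely, which is clear since at each stage only finitely many generators have been used.
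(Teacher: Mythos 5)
Your proposal is correct and follows essentially the same route as the paper: odd-length monomials in $A_{0}$ come from the relation $0=a(b/d)+(-b)(a/d)$ with $(a/d)+(b/d)$ odd, even-length ones from multiplying odd-length monomials of $A_{d}$ and $A_{-d}$ supplied by the type $(4')$ hypothesis, and both parities are then propagated to every $A_{r}$ with $r\in\langle d\rangle$ via $wA_{0}\subseteq A_{r}$. Your write-up is merely more explicit about the disjoint-support bookkeeping, which the paper leaves implicit.
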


We give the explicit form of the graded identities for the gradings of full support on $E$. Here we are considering $U=\cup_{n\in\mathbb{Z}}X_{nd}$. 

\begin{theorem}\label{idGl1l2}
Suppose that $\{a, b, c\}$ satisfies $\hat{B}$, and assume that the product $(a/d) (b/d)$ is  even, where $\gcd(a,b)=d$ and $k\geq d-1$. The $T_\mathbb{Z}$-ideal of the graded identities for  $E_{(-b, c, a)}^{(\infty,k,\infty )}$ is generated by the polynomials 
\begin{enumerate}
\item[(i)] $[x_1, x_2, x_3]$, where each $x_i\in X_\mathbb{Z}$;
\item[(ii)] $u_1u_2\cdots u_{\alpha}$, where the variables $u_i$ lie in $X_\mathbb{Z}\setminus U$ with $\|u_i\|=q_id+r_ic$ such that $1\leq r_i\leq d-1$, $r_1+\cdots+r_\alpha>k$.
\end{enumerate}
\end{theorem}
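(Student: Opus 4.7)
The plan is to split the proof into two halves: verify that (i) and (ii) are $\mathbb{Z}$-graded identities of $E_{(-b,c,a)}^{(\infty,k,\infty)}$, then prove the reverse containment by reducing, via $d$-centrality, to identifying the graded identities of the induced $\mathbb{Z}_d$-grading on $E$. The identity $[x_1,x_2,x_3]=0$ holds ungraded in $E$ by Lie nilpotency of class two, hence graded in every $\mathbb{Z}$-graded version. For the monomial in (ii), I would observe that in any admissible substitution $u_i\mapsto f_i\in E$, each non-vanishing Grassmann summand of $f_i$ must use a number $n_c^{(i)}$ of $L_c^k$-generators satisfying $n_c^{(i)}\equiv r_i\pmod d$ (this uses $\gcd(c,d)=1$, which follows from the condition $\hat B$), hence $n_c^{(i)}\geq r_i$. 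In any non-zero term of $f_1\cdots f_\alpha$ the supports are pairwise disjoint, so the total number of $L_c^k$-generators used is at most $k<r_1+\cdots+r_\alpha$, a contradiction.

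For the reverse inclusion, the hypothesis $(a/d)(b/d)$ even, combined with Proposition~\ref{lemas juntos} (in cases $(1')$--$(3')$) or Lemma~\ref{type4central} (in case $(4')$), guarantees that $E_{(-b,c,a)}^{(\infty,k,\infty)}$ is $d$-central. Proposition~\ref{Hcentral geral-prop} then yields
\[
T_{\mathbb{Z}}\bigl(E_{(-b,c,a)}^{(\infty,k,\infty)}\bigr)=\pi_d^{-1}\bigl(T_{\mathbb{Z}_d}(B)\bigr),
\]
where $B$ is the induced $\mathbb{Z}_d$-grading on $E$. In $B$, since $d\mid a$ and $d\mid b$, the generators from $L_{-b}^{\infty}$ and $L_a^{\infty}$ all have degree $\bar 0$, while the $k$ generators in $L_c^k$ carry degree $\bar c$, a generator of $\mathbb{Z}_d$ because $\gcd(c,d)=1$.

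The main obstacle is to describe $T_{\mathbb{Z}_d}(B)$ explicitly. I would claim it is generated, as a $T_{\mathbb{Z}_d}$-ideal, by $[y_1,y_2,y_3]$ together with all monomials $v_1\cdots v_\alpha$ where each $v_i$ has $\mathbb{Z}_d$-degree $\overline{r_ic}$ with $1\leq r_i\leq d-1$ and $r_1+\cdots+r_\alpha>k$; these are identities by the same counting argument used for (ii). To establish completeness, I would pass to multilinear polynomials (characteristic zero), use $[y_1,y_2,y_3]=0$ to rewrite each one as a linear combination of expressions of the form $y_{i_1}\cdots y_{i_s}\prod_j[y_{p_j},y_{q_j}]$ with central commutators, and argue, along the lines of \cite{disilplamen, alan-brandao -claud}, that vanishing on $B$ forces the monomial part to lie in the proposed ideal. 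The adaptation of this combinatorial step to a general (not necessarily prime) modulus $d$ is the technical heart of the argument. Finally, a $\mathbb{Z}$-degree $qd+rc$ variable with $1\leq r\leq d-1$ projects via $\pi_d$ to a $\mathbb{Z}_d$-degree $\overline{rc}$ variable, so the monomial generators of $T_{\mathbb{Z}_d}(B)$ lift to exactly family (ii) while $[y_1,y_2,y_3]$ lifts to (i), closing the proof.
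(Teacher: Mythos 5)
Your proposal follows essentially the same route as the paper's proof: establish $d$-centrality via Proposition~\ref{lemas juntos} and Lemma~\ref{type4central}, reduce to the induced $\mathbb{Z}_d$-grading through Proposition~\ref{Hcentral geral-prop}, and describe the resulting $\mathbb{Z}_d$-graded identities by the multilinear argument of \cite{disilplamen}. Your explicit counting verification that (i) and (ii) are indeed identities, and your flagging of the adaptation of that combinatorial step to composite $d$ as the technical core, are correct additions but do not change the approach.
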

\begin{proof}
Due to Proposition \ref{lemas juntos} and Lemma \ref{type4central}, we have that the algebra $E_{(-b, c, a)}^{(\infty,k,\infty )}$ is equipped with a $d$-central $\mathbb{Z}$-grading. We denote its induced $\mathbb{Z}_d$-grading by $E_{c,k}^d$. As $\gcd(d,c)=1$, it follows from Lemma \ref{basico mas útil} that $\bar{c}$ is a generator of the group $\mathbb{Z}_d$. Hence, without loss of generality, we can assume $\bar{c}=\bar{1}$. Consider $E_{c,k}^d=G_{\overline{0}}\oplus G_{\overline{1}}\oplus \cdots\oplus G_{\overline{d-1}}$ and let $L_{\overline{t}}=L\cap G_{\overline{t}}$, for each $t=0$, \dots, $d-1$. We conclude that $\dim L_{\bar{0}}=\infty$, $\dim L_{\bar{1}}=k$ and $\dim L_{\bar{i}}=0$, for every $i=2$, \dots, $d-1$. We define $A$ as the set
\[
\{(\alpha_1,\ldots,\alpha_{q-1})\in\mathbb{N}^{d-1}\mid x^{\bar{1}}_{1}\cdots x^{\bar{1}}_{\alpha_1}x^{\bar{2}}_{1}\cdots  x^{\overline{d-1}}_{\alpha_{d-1}}\not\in T_d(E_{c,k}^d)\}.
\]
Since the field $F$ is of characteristic zero, following the same argument that was used in \cite[Proposition 5]{disilplamen}, we can show that if $(\alpha_1,\ldots,\alpha_{d-1})\in A$ then
\[
\psi(V_{\alpha}\cap T(E))=V_{\alpha_{0},\alpha_{1},\ldots, \alpha_{d-1}}\cap T_{d}(E_{c,k}^d),
\]
where $T(E)$ is the ordinary $T$-ideal of $E$ and $\alpha=\alpha_0+\alpha_{1}+\cdots+\alpha_{d-1}$. Now it suffices to apply Proposition \ref{Hcentral geral-prop}.  
\end{proof}

In order to complete the description of the bases for the graded identities for the 3-induced $\mathbb{Z}$-grading on $E$ of full support of case (B), it remains to consider the situation when $a/d$ and $b/d$ are odd integers, and $d=\gcd(a,b)$. 

\begin{theorem}\label{idG4}
	Suppose that $\{a, b, c\}$ satisfies $\hat{B}$, and assume $a/d$ and $b/d$ are odd integers, where $\gcd(a,b)=d$, and $k\geq d-1$. The $T_\mathbb{Z}$-ideal of the graded identities for $E_{(-b, c, a)}^{(\infty,k,\infty )}$ is generated by the polynomials 
	\begin{enumerate}
			\item[(i)] $[x_{1}, x_{2}]$, if either $\|x_{1}\|$ or $\|x_{2}\|$ is of the form $\alpha d+\beta c$, where $\alpha$ and $\beta$ are both even or both odd;
			\item[(ii)] $x_{1}x_{2}+x_{2}x_{1}$, if both $\|x_{1}\|$ and $\|x_{2}\|$ have the form $\alpha d+\beta c$, where one of $\alpha$ and $\beta$ is even and the other is odd;
			\item[(iii)] $u_1u_2\cdots u_{\alpha}$, where the variables $u_i$ lie in $X_\mathbb{Z}\setminus U$, with $\|u_i\|=q_id+cr_i$ such that $1\leq r_i\leq d-1$, $r_1+\cdots+r_\alpha>k$.
		\end{enumerate}
\end{theorem}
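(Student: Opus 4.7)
The plan is to follow the same structural strategy as Theorem \ref{idGl1l2}, adapting it from the $d$-central to the $2d$-central setting. First, I would invoke Lemma \ref{m, n odd implies type 4}: since $a/d$ and $b/d$ are odd, the 2-induced grading $E_{(-b,a)}^{(\infty,\infty)}$ falls into case $(4')$. Proposition \ref{caso (4) d central} then gives that $E_{(-b,c,a)}^{(\infty,k,\infty)}$ is $2d$-central, so Proposition \ref{Hcentral geral-prop}, applied with $H=\langle 2d\rangle$, reduces the description of $T_{\mathbb{Z}}\bigl(E_{(-b,c,a)}^{(\infty,k,\infty)}\bigr)$ to that of $T_{\mathbb{Z}_{2d}}(B)$, where $B$ denotes the induced $\mathbb{Z}_{2d}$-grading on $E$. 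Under the projection $\pi_{2d}$, it will then suffice to match the $\pi_{2d}$-images of (i)--(iii) with a generating set of $T_{\mathbb{Z}_{2d}}(B)$.

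Next I would verify that each of (i)--(iii) is indeed a graded identity. The key input is a length-parity calculation: given a monomial $w$ with $t$ factors of degree $a$, $s$ factors of degree $-b$, and $m$ factors of degree $c$, one has
\[
\|w\|=\bigl(t(a/d)-s(b/d)\bigr)d+mc,
\]
a representation $\alpha d+\beta c$ with $\alpha\equiv t+s\pmod{2}$ (using that $a/d$ and $b/d$ are odd) and $\beta=m$. The length $t+s+m$ is then congruent to $\alpha+\beta$ modulo $2$, so when $\alpha+\beta$ is even (resp.\ odd) the monomial lies in $E_{(0)}$ (resp.\ $E_{(1)}$); hence (i) is the commutator identity on the central part and (ii) is the anticommutator identity on the odd part. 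Identity (iii) is verified using $\gcd(c,d)=1$: any monomial of $\mathbb{Z}$-degree $q_id+r_ic$ with $1\leq r_i\leq d-1$ has its number $m_i$ of $c$-type factors satisfying $m_i\equiv r_i\pmod{d}$, hence $m_i\geq r_i$; so an admissible substitution into $u_1\cdots u_\alpha$ requires at least $r_1+\cdots+r_\alpha>k$ generators from $L_c^k$, forcing a repetition among the $k$ available ones and vanishing in $E$.

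The remaining and hardest step is completeness. Here I would follow the method used in Theorem \ref{idGl1l2}: in the induced $\mathbb{Z}_{2d}$-grading $B$, the generators of original degrees $a$ and $-b$ collapse to the single degree $\bar d\in\mathbb{Z}_{2d}$ (since $a/d,b/d$ odd force $a\equiv -b\equiv d\pmod{2d}$), while the $c$-type generators occupy the degree $\bar c$. One can then invoke, and if necessary extend, the correspondence from \cite[Proposition 5]{disilplamen} between the multilinear components of $T_{\mathbb{Z}_{2d}}(B)$ and those of the ordinary $T$-ideal $T(E)$, using that the identities (i)--(ii) impose exactly the commutator and anticommutator relations of the natural $\mathbb{Z}_2$-grading on $E$, while (iii) accounts for the finiteness $\dim L_c=k<\infty$. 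Combined with Proposition \ref{Hcentral geral-prop}, this yields that $T_{\mathbb{Z}}\bigl(E_{(-b,c,a)}^{(\infty,k,\infty)}\bigr)$ is generated as a $T_\mathbb{Z}$-ideal by (i)--(iii). The main technical obstacle will be managing simultaneously the two parities at play — the mod-$2d$ parity coming from $\langle d\rangle/\langle 2d\rangle\cong\mathbb{Z}_2$, and the $E_{(0)}$--$E_{(1)}$ length parity on $E$ — so that the multilinear identification respects both, and in particular that, inside the relatively free algebra modulo (i)--(iii), distinct orderings of variables produce at most a sign ambiguity consistent with the one on $E$.
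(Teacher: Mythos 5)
Your proposal is essentially correct as a strategy, but it takes a genuinely different route from the paper's. The paper does not pass through the $2d$-central machinery here at all: its proof invokes Lemma \ref{A in centro} (so that $A_{qd}\subseteq Z(E)$ for $q$ even and $A_{qd}\subseteq E_{(1)}$ for $q$ odd) and then runs a truncation argument --- stripping the $c$-type factors off two hypothetical monomials of the same $\mathbb{Z}$-degree but opposite length parity so as to land in some $A_{qd}$ violating that lemma --- to conclude that \emph{every} component $A_r$ lies wholly in $Z(E)$ or wholly in $E_{(1)}$; items (i) and (ii) then read off this dichotomy, and (iii) follows from $\{0,c,\dots,(d-1)c\}$ being distinct modulo $d$ together with $\dim L_c^k=k$. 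Your direct length-parity computation ($\alpha\equiv t+s$, $\beta=m$, length $\equiv\alpha+\beta\pmod 2$) reaches the same dichotomy more transparently, but note that the representation $r=\alpha d+\beta c$ of a degree is not unique, so the parity of $\alpha+\beta$ must be shown to be an invariant of the component before (i) and (ii) are even well-posed --- this is exactly what the paper's truncation argument supplies and what your sketch takes for granted. The real divergence is in completeness: you reduce via Propositions \ref{caso (4) d central} and \ref{Hcentral geral-prop} to the induced $\mathbb{Z}_{2d}$-grading and then need a basis for its $T_{\mathbb{Z}_{2d}}$-ideal, which \cite[Proposition 5]{disilplamen} (stated for $\mathbb{Z}_p$ with $p$ prime) does not directly provide, so that extension would have to be proved; the paper instead argues on the components themselves and leaves its completeness step largely implicit. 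Your route buys uniformity with Theorem \ref{idGl1l2}; the paper's buys a more self-contained soundness argument.
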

\begin{proof}
First we apply Lemma \ref{A in centro}. If $r=(2{l})d$ then $A_r\subset Z(E)$, and if $r=(2l-1)d$ then $A_r\subseteq E_{(1)}$. Suppose initially that there is an integer $r$ such that $(A_r\cap Z(E))$ and $(A_r\cap E_{(1)})$ are non-trivial. Suppose that $m_1=u_1\cdots u_t e_{i_1}e_{i_2}\cdots e_{i_n}\in A_r\cap Z(E)$ such that each $u_i\in L_{a}^{\infty}\cup L_{-b}^{\infty}$ and $e_{i_l}\in L_{c}^{k}$. Similarly, we consider $m_1=u^\prime_1\cdots u^\prime_k e_{j_1}e_{j_2}\cdots e_{j_m}\in A_r\cap E_{(1)}$. Without loss of generality, assume $m\geq n$. In this case, $u_1\cdots u_t\in A_{r-nc}$, and $u^\prime_1\cdots u^\prime_k e_{j_1}e_{j_2}\cdots e_{j_{m-n}} \in A_{r-nc}$. It follows that there exists $q\in\mathbb{Z}$ such that $qd=r-nc$, and therefore $A_{qd}\cap Z(E)$ and $A_{qd}\cap E_1$ are non-trivial, thus contradicting Lemma \ref{A in centro}. Therefore either $A_r\subseteq Z(E)$ or $A_r\subseteq E_{(1)}$.

Notice that $\gcd(d,c)=1$ implies that $\bar{c}$ is a generator of the group $\mathbb{Z}_d$. It follows that the set $\{0, c,2c,\ldots, (d-1)c\}$ is formed, modulo $d$, by distinct elements. The theorem is proved.
\end{proof}

The last two theorems conclude the complete description of the bases of the graded identities for every $3$-induced $\mathbb{Z}$-grading on $E$ which is of full support.

Let $\mathfrak{V}_{1,k}^\mathbb{Z}$ be the variety  of $\mathbb{Z}$-graded algebras defined by $E_{(-b, c, a)}^{(\infty,k,\infty )}$ such that the product $(a/d) (b/d)$ is even. Moreover, if $(a/d) (b/d)$ is odd, let $\mathfrak{V}_{2,k}^\mathbb{Z}$ denote the variety  of $\mathbb{Z}$-graded algebras defined by $E_{(-b, c, a)}^{(\infty,k,\infty )}$.

We give below a table that contains the data obtained in this section. Let  $d=\gcd(a,b)$, then

\medskip
\begin{tabular}{|c|c|c|c|c|}
\hline
\multirow{5}{*}{$A$}  & \multirow{2}{*}{$k<\infty$} & $a b$ odd and $c$ even & $E_{(-b, c, a)}^{(\infty,k,\infty )}$ generates $\mathfrak{V}_{(-1, 0, 1)}^{(\infty, k,\infty )}$\\ 
\cline{3-3}\cline{4-4}
 & & \multirow{2}{*}{$a b$ even }& \multirow{2}{*}{$E_{(-b, c, a)}^{(\infty,k,\infty )}$ generates $\mathfrak{V}_{(-1, 0, 2)}^{(\infty, k,\infty )}$}\\
\cline{2-2}
 & $k=\infty$ & & \\
\cline{3-3}\cline{2-2}\cline{4-4}
 & $k\geq 0$ & \multirow{2}{*}{$a bc$ odd}& \multirow{2}{*}{$E_{(-b, c, a)}^{(\infty,k,\infty )}$ generates $\mathfrak{V}_{can}$}\\
 \cline{1-2}
\multirow{5}{*}{$B$}   & \multirow{2}{*}{$k=\infty$} & & \\
\cline{3-4}
&  & $a b c$ even& $E_{(-b, c, a)}^{(\infty,k,\infty )}$ generates $\mathfrak{V}_{(-1, 0, 2)}^{(\infty, k,\infty )}$\\
\cline{2-4}
 & \multirow{2}{*}{$k<\infty$} & $(a/d)(b/d)$ even& $E_{(-b, c, a)}^{(\infty,k,\infty )}$ generates $\mathfrak{V}_{1,k}^\mathbb{Z}$\\
\cline{3-4}
 & & $(a/d)(b/d)$ odd& $E_{(-b, c, a)}^{(\infty,k,\infty )}$ generates $\mathfrak{V}_{2,k}^\mathbb{Z}$\\
\hline
\end{tabular}

\section{Generalities}

In this section we comment on some natural generalizations of the results of Section \ref{sect4}. We start with general $n$-induced $\mathbb{Z}$-gradings on the Grassmann algebra and, afterwards, we prove that our results hold for $3$-induced $\mathbb{Z}$-gradings on $E$ such that $S_{(-b,c, a)}^{(\infty,k,\infty)}$ is a subgroup of $\mathbb{Z}$.

\subsection{$n$-induced gradings}
Now we explicit some $n$-induced gradings on $E$ which are of full support. 

\begin{proposition}
	Let $E_{(r_{1},\ldots, r_{n})}^{(v_{1},\ldots, v_{n})}$ be $n$-induced $\mathbb{Z}$-grading on $E$. If there exist $i$, $j\in\{1,\dots, n\}$ such that $r_{i}<0<r_{j}$, $\gcd(r_{i},r_{j})=1$, and $v_{i}=v_{j}=\infty$, then $E_{(r_{1},\ldots, r_{n})}^{(v_{1},\ldots, v_{n})}$ is of full support.
\end{proposition}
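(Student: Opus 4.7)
The plan is to reduce this immediately to the 2-index case handled by Theorem \ref{o caso dois indices}. The hypothesis singles out two indices $r_i<0<r_j$ with $\gcd(r_i,r_j)=1$ and $v_i=v_j=\infty$, which is precisely the condition under which Theorem \ref{o caso dois indices} gives $S_{(r_i,r_j)}^{(\infty,\infty)}=\langle 1\rangle=\mathbb{Z}$.

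The key observation is that the 2-induced grading obtained from just the basis vectors of $L_{r_i}^{v_i}\oplus L_{r_j}^{v_j}$ can be viewed as a ``restriction'' of the $n$-induced grading: every monomial $e_{k_1}\cdots e_{k_s}$ in $E_{(r_i,r_j)}^{(\infty,\infty)}$ is simultaneously a monomial in $E_{(r_1,\ldots,r_n)}^{(v_1,\ldots,v_n)}$ of the same $\mathbb{Z}$-degree, since the $\mathbb{Z}$-degree is defined in the same way on the common generators. Therefore
\[
S_{(r_i,r_j)}^{(\infty,\infty)}\subseteq S_{(r_1,\ldots,r_n)}^{(v_1,\ldots,v_n)}\subseteq \mathbb{Z}.
\]
Combining with the previous paragraph yields $S_{(r_1,\ldots,r_n)}^{(v_1,\ldots,v_n)}=\mathbb{Z}$, which is exactly the assertion of full support.

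There is essentially no obstacle here; the statement is a direct corollary of Theorem \ref{o caso dois indices} together with the monotonicity of the support under adding further homogeneous components. The only point to be careful about is to emphasize that the infinite dimensions $v_i=v_j=\infty$ are needed (so that Theorem \ref{o caso dois indices} applies to the pair $(r_i,r_j)$), while the remaining indices $r_\ell$ and multiplicities $v_\ell$ for $\ell\notin\{i,j\}$ play no role whatsoever in the argument.
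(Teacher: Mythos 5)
Your proof is correct and is precisely the straightforward reduction to Theorem \ref{o caso dois indices} that the paper itself has in mind (the authors omit the proof as ``straightforward''): the subalgebra generated by $L_{r_i}^{\infty}\oplus L_{r_j}^{\infty}$ carries the $2$-induced grading of full support, and its homogeneous components sit inside those of the $n$-induced grading, giving $\mathbb{Z}=S_{(r_i,r_j)}^{(\infty,\infty)}\subseteq S_{(r_{1},\ldots, r_{n})}^{(v_{1},\ldots, v_{n})}$. Nothing further is needed.
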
 
\begin{proof}
The proof is straightforward, hence we omit it.
\end{proof}

Actually, the previous proposition holds in a more general situation. Let $S_{(r_{1},\ldots, r_{n})}^{(v_{1},\ldots, v_{n})}=\langle d\rangle$ and assume that $r_1<r_2<\ldots <r_n$ (this can always be achieved by permuting simultaneously the lower and upper indices). 
	If $r_1>0$, then it is clear that $S_{(r_{1}, \ldots, r_{n})}^{(v_{1}, \ldots, v_{n})}\subset \mathbb{N}_{0}$. If $r_n< 0$ we conclude that $S_{(r_{1},\ldots, r_{n})}^{(v_{1},\ldots, v_{n})}\subset \{-k\mid k\in\mathbb{N}_{0}\}$. 
	Hence it follows that $r_1<0<r_n$. We fix the unique integer $i\in \{1,2,\ldots,n\}$ such that 
\[
r_1<r_2<\ldots<r_i<0<r_{i+1}<\ldots<r_{n}.
\]
	If $v_j=\dim L_{r_j}^{v_j}<\infty$, for $j=i+1$, \dots, $n$, then  $S_{(r_{1},\ldots, r_{n})}^{(v_{1},\ldots, v_{n})}\subset \{a\in\mathbb{Z}\mid a\geq r_1v_1+r_2v_2+\cdots+r_iv_i\}$. A similar conclusion holds if  $v_j=\dim L_{r_j}^{v_j}<\infty$, for  $j=1$, 2, \dots, $i$. We summarize these comments in the following proposition.
	\begin{proposition}
			Let $E_{(r_{1},\ldots, r_{n})}^{(v_{1},\ldots, v_{n})}$ be an $n$-induced $\mathbb{Z}$-grading on $E$. If $S_{(r_{1},\ldots, r_{n})}^{(v_{1},\ldots, v_{n})}$ is a subgroup of $\mathbb{Z}$, then there exist $i$, $j\in\{1,\ldots, n\}$ such that $r_{i}<0<r_{j}$, and $v_{i}=v_{j}=\infty$.
	\end{proposition}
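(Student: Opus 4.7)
The plan is to leverage the trivial classification of subgroups of $\mathbb{Z}$ together with a boundedness argument: any subgroup of $\mathbb{Z}$ that contains a nonzero element is of the form $d\mathbb{Z}$ with $d\ge 1$, and hence is unbounded both above and below. After relabelling I would assume $r_1<r_2<\cdots<r_n$ (with the corresponding $v_j$ permuted). The identity of $E$ has degree $0$ and each generator $e_k\in L_{r_j}^{v_j}$ witnesses $r_j\in S$, so the support already contains $0$ together with all the (distinct) $r_j$'s. Since $n\ge 2$, the support $S=S_{(r_1,\ldots,r_n)}^{(v_1,\ldots,v_n)}$ is then a nontrivial subgroup of $\mathbb{Z}$, so it is unbounded in both directions. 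This immediately rules out $r_n<0$ (which would force $S\subseteq -\mathbb{N}_0$) and $r_1>0$ (which would force $S\subseteq \mathbb{N}_0$), so $r_1<0<r_n$.

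The heart of the argument is the following boundedness step, to be applied twice by a symmetric argument. Suppose, for contradiction, that $v_j<\infty$ for every index $j$ with $r_j>0$, and set $M:=\sum_{j:\,r_j>0} v_j r_j$. Every element of $S$ has the form $\sum_{j=1}^n \alpha_j r_j$ with $\alpha_j\in\mathbb{N}_0$ and $\alpha_j\le v_j$ whenever $v_j$ is finite. The terms with $r_j\le 0$ contribute non-positively, while the terms with $r_j>0$ contribute at most $M$. Hence $S\subseteq (-\infty,M]$, contradicting that a nontrivial subgroup of $\mathbb{Z}$ must be unbounded above. This forces the existence of some $j$ with $r_j>0$ and $v_j=\infty$; repeating the same reasoning with the signs reversed produces an index $i$ with $r_i<0$ and $v_i=\infty$, which yields the proposition.

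I do not anticipate a genuine obstacle, since the proposition is essentially a formalization of the intuition sketched in the paragraphs preceding its statement. The only mild subtlety is to keep track of a possible intermediate index with $r_j=0$, which may exist but plays no role in the extremal sign analysis because the unboundedness of $S$ is witnessed purely by the endpoints $r_1$ and $r_n$ and the upper/lower bounds established in the finite-$v_j$ scenario.
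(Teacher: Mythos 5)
Your proof is correct and follows essentially the same route as the paper: the paper's own justification (given in the paragraph preceding the proposition) is exactly this boundedness argument — order the $r_j$, note $r_1<0<r_n$ since a nontrivial subgroup of $\mathbb{Z}$ is unbounded in both directions, and observe that finiteness of all $v_j$ on one sign would bound the support on that side. Your write-up is in fact slightly cleaner, since the paper states the resulting bound with the inequality oriented the wrong way.
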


From now on, we assume the conditions of the previous proposition. In order to simplify the notation, assume $r_{1}<0<r_{2}$, $\gcd(r_{1},r_{2})=d$ and $v_{1}=v_{2}=\infty$. In this case, there exist $\alpha$, $\beta$, $\alpha'$, $\beta'$ in $\mathbb{N}_{0}$ such that
\[d=\alpha r_{1}+\beta r_{2},\quad \textrm{ and }\quad -d=\alpha' r_{1}+\beta' r_{2},\]
according to what was done before Proposition \ref{lemas juntos}. Moreover, we fix $\alpha$, $\beta$, $\alpha'$, $\beta'$ with the property that $\alpha+ \beta$ and $\alpha' + \beta'$ are the least possible. Now we study the parities of $\alpha+ \beta$ and $\alpha' + \beta'$. The description of the graded identities of $E_{(r_{1},\ldots, r_{n})}^{(v_{1},\ldots, v_{n})}$ can be performed using the same arguments as those of Section \ref{sect4}.

As mentioned in the Introduction, the methods we develop here suggest that  an adequate usage of Elementary Number Theory could contribute to the study of gradings and graded identities not only on the Grassmann algebra $E$, but also on other important algebras. Recall that if $d$ is a prime, one can apply our methods to the problems studied in the papers \cite{disil,disilplamen}. In other words, the description of gradings on $E$ whose support is a subgroup of $\mathbb{Z}$, and their graded identities,  is ``reduced'' to: 1) the problem of the description of graded identities on $E$ by a cyclic group, and 2) the usage of techniques derived from number theory.

\subsection{Weak isomorphisms}

Corollary \ref{caracteriz} gives us conditions for $E_{(r_{1},r_{2},r_{3})}^{(v_{1}, v_{2}, v_{3})}$ to be a subgroup of $\mathbb{Z}$. When $S_{(r_{1},r_{2},r_{3})}^{(v_{1}, v_{2}, v_{3})}=\mathbb{Z}$, we described completely the graded identities of $E_{(r_{1},r_{2},r_{3})}^{(v_{1}, v_{2}, v_{3})}$. 

Here we study the case where $S_{(r_{1},\ldots, r_{n})}^{(v_{1},\ldots, v_{n})}=\langle \gcd(r_{1},\ldots, r_{n})\rangle$ is a proper subgroup of $\mathbb{Z}$. We give a relation between $T_{\mathbb{Z}}(E_{(r_{1},r_{2},r_{3})}^{(\infty, k, \infty)})$ and $T_{\mathbb{Z}}(E_{(\frac{r_1}{d},\frac{r_2}{d}, \frac{r_3}{d})}^{(\infty, k, \infty)})$, where $d=\gcd(r_{1},r_{2},r_{3})$. To this, we need the following definition.

\begin{definition}
	Let $A=\oplus_{g\in G}A_g$ and $B=\oplus_{h\in H}B_h$ be algebras graded by the groups $G$ and $H$, respectively. The graded algebras $A$ and $B$ are weakly isomorphic if there exists an isomorphism of groups $\rho:G\rightarrow H$ and an isomorphism of algebras $\varphi:A \rightarrow B$ such that $\varphi (A_g)=B_{\rho(g)}$, for every $g$ in $G$. 
\end{definition}

Let $G$ and $H$ be groups and let $A=\oplus_{g\in G}A_{g}$ and  $B=\oplus_{h\in H}B_h$ be algebras graded by the groups $G$ and $H$, respectively. If $A$ and $B$ are weakly isomorphic with respect to the isomorphism of groups $\rho\colon G\to H$ then a polynomial $f\in F\langle X|G \rangle$ is a graded identity for $A$ if and only if $\Phi (f)$ is a graded identity for $B$, where $\Phi: F\langle X|G \rangle \rightarrow F\langle X|H \rangle$ is the isomorphism such that $\Phi (x_{i}^{g})=x_{i}^{\rho(g)}$. Moreover if $S$ is a basis for the $T_G$-ideal $T_G(A)$ then $\Phi(S)$ is a basis for the $T_H$-ideal $T_H(B)$.

Given $d\in \mathbb{Z}$, we consider the map $\Phi_d\colon F\langle X|\mathbb{Z}\rangle\to F\langle X|\langle d\rangle\rangle$ defined by $\Phi_d(x_i^n)=x_i^{dn}$. If $d\neq 0$, it is clear that the map $\rho\colon \mathbb{Z}\to \langle d\rangle$, given by $\rho(n)=dn$, is an isomorphism of groups.
\begin{lema}\label{good1}
	Let $f\in F\langle X|\mathbb{Z}\rangle$ be a multilinear polynomial. Then $f\in T_{\mathbb{Z}}(E_{\left( \frac{r_{1}}{d},\ldots, \frac{r_{n}}{d}\right) }^{(v_{1},\ldots, v_{n})})$ if and only if $\varPhi_d(f)\in T_{\langle d\rangle}(E_{\left( {r_{1}},\ldots, {r_{n}}\right) }^{(v_{1},\ldots, v_{n})})$.
\end{lema}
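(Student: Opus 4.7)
The plan is to exhibit a weak isomorphism between the two graded structures and then invoke the general transfer principle for graded identities recalled just before the lemma.

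First I would fix a common underlying algebra. Both structures $E_{(r_{1}/d,\ldots, r_{n}/d)}^{(v_{1},\ldots, v_{n})}$ and $E_{(r_{1},\ldots, r_{n})}^{(v_{1},\ldots, v_{n})}$ are built from the \emph{same} Grassmann algebra $E$ with the \emph{same} decomposition $L=L^{v_{1}}\oplus\cdots\oplus L^{v_{n}}$ and the same partition of the canonical basis; only the labels attached to the summands differ. Concretely, a basis element $e_{k}$ belonging to the $j$-th summand has $\mathbb{Z}$-degree $r_{j}/d$ in the first grading and $\mathbb{Z}$-degree $r_{j}$ in the second.

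Second, consider $\rho\colon \mathbb{Z}\to \langle d\rangle$, $n\mapsto dn$, which is a group isomorphism. Because $\gcd(r_{1},\ldots,r_{n})=d$, the support of $E_{(r_{1},\ldots, r_{n})}^{(v_{1},\ldots, v_{n})}$ is contained in $\langle d\rangle$, so this algebra may legitimately be viewed as graded by $\langle d\rangle$. Then the identity map on $E$ is a weak isomorphism of graded algebras
\[
E_{(r_{1}/d,\ldots, r_{n}/d)}^{(v_{1},\ldots, v_{n})}\longrightarrow E_{(r_{1},\ldots, r_{n})}^{(v_{1},\ldots, v_{n})}
\]
with respect to $\rho$: for each $j$ and each basis monomial with $\mathbb{Z}$-degree $\alpha\in\mathbb{Z}$ in the first grading, the same monomial has $\langle d\rangle$-degree $d\alpha=\rho(\alpha)$ in the second, by additivity of degree on monomials. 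Hence the homogeneous component of degree $n$ in the first grading is taken bijectively onto the homogeneous component of degree $dn$ in the second.

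Third, the induced isomorphism on free algebras is precisely $\Phi_{d}\colon F\langle X|\mathbb{Z}\rangle\to F\langle X|\langle d\rangle\rangle$, sending $x_{i}^{n}$ to $x_{i}^{dn}$. Applying the general principle stated just before the lemma (weakly isomorphic graded algebras satisfy exactly the corresponding graded identities under the induced isomorphism of free algebras), one obtains for every polynomial $f\in F\langle X|\mathbb{Z}\rangle$ the equivalence
\[
f\in T_{\mathbb{Z}}\bigl(E_{(r_{1}/d,\ldots, r_{n}/d)}^{(v_{1},\ldots, v_{n})}\bigr) \iff \Phi_{d}(f)\in T_{\langle d\rangle}\bigl(E_{(r_{1},\ldots, r_{n})}^{(v_{1},\ldots, v_{n})}\bigr).
\]
Multilinearity is not really needed for the equivalence itself; it is stated because the applications in the sequel apply this fact on multilinear polynomials, which suffices in characteristic zero.

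The only genuine subtlety, and the one I would be most careful about, is the bookkeeping of supports: one must interpret $T_{\langle d\rangle}(E_{(r_{1},\ldots, r_{n})}^{(v_{1},\ldots, v_{n})})$ as the ideal of identities in the free $\langle d\rangle$-graded algebra $F\langle X|\langle d\rangle\rangle$, not in $F\langle X|\mathbb{Z}\rangle$, and check that admissible substitutions on both sides correspond under $\rho$. Once this is set, the proof reduces to the weak-isomorphism principle and no further computation is required.
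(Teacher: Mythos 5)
Your proposal is correct and follows exactly the route the paper intends: the paper states Lemma \ref{good1} without a written proof, immediately after introducing weak isomorphisms, the transfer principle for graded identities, and the maps $\rho$ and $\Phi_d$, so the lemma is meant to be the instance of that principle you describe (identity map on $E$, group isomorphism $n\mapsto dn$, support of the second grading contained in $\langle d\rangle$). Your observation that multilinearity is not needed for the equivalence itself is also accurate.
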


Now we define the homomorphism $\Psi_d \colon F\langle X|d\mathbb{Z}\rangle\to F\langle X|\mathbb{Z}\rangle$ given by $\Psi_d(x_i^{dn})=x_i^{dn}$. By using the same argument applied in \cite{alan-brandao -claud}, we obtain the  following. 
\begin{theorem}
	Let $S_{(r_{1},\ldots, r_{n})}^{(v_{1},\ldots, v_{n})}$ be a subgroup of $\mathbb{Z}$ and let $d=\gcd(r_{1},\ldots, r_{n})$. If $S$ is a basis for the $T_\mathbb{Z}$-ideal $T_{\mathbb{Z}}(E_{\left( \frac{r_{1}}{d},\ldots, \frac{r_{n}}{d}\right) }^{(v_{1},\ldots, v_{n})})$, then the $T_\mathbb{Z}$-ideal $T_{\mathbb{Z}}(E_{(r_{1},\ldots, r_{n})}^{(v_{1},\ldots, v_{n})})$ is generated by the set $S^\prime\cup N$, where 
	$$S^\prime=\{\Psi_d(\varPhi_d(f))\mid f\in S\}$$
	and
	$$N=\{x\in F\langle X|\mathbb{Z}\rangle\mid \|x\|\notin \langle d\rangle\}.$$
\end{theorem}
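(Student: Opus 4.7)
The plan is to establish the double inclusion
$T_\mathbb{Z}(E_{(r_1,\ldots,r_n)}^{(v_1,\ldots,v_n)}) = \langle S' \cup N\rangle_{T_\mathbb{Z}}$.
Write $A = E_{(r_1,\ldots,r_n)}^{(v_1,\ldots,v_n)}$ and $A' = E_{(r_1/d,\ldots,r_n/d)}^{(v_1,\ldots,v_n)}$; these coincide as ungraded algebras, but the grading on $A$ takes values only in $\langle d\rangle$, whereas $A'$ is of full support in $\mathbb{Z}$. Observe that $\Psi_d \circ \Phi_d$ sends $x_i^n$ to $x_i^{dn}$, so every generator in $S'$ involves only variables whose $\mathbb{Z}$-degree already lies in $\langle d\rangle$.

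For the inclusion $\supseteq$, any $x_i^n \in N$ has $n \notin \langle d\rangle$, so $A_n = 0$ and $x_i^n$ is trivially in $T_\mathbb{Z}(A)$. For $f \in S$, Lemma \ref{good1} supplies $\Phi_d(f) \in T_{\langle d\rangle}(A)$. Since every variable appearing in $\Psi_d(\Phi_d(f))$ has $\mathbb{Z}$-degree in $\langle d\rangle$, every admissible $\mathbb{Z}$-graded substitution into $A$ must use elements lying in components indexed by $\langle d\rangle$, and therefore coincides with an admissible $\langle d\rangle$-graded substitution of $\Phi_d(f)$. Hence $\Psi_d(\Phi_d(f)) \in T_\mathbb{Z}(A)$.

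For the inclusion $\subseteq$, characteristic zero lets me restrict attention to a multilinear $f \in T_\mathbb{Z}(A)$. If some variable of $f$ has degree outside $\langle d\rangle$, every monomial of $f$ is divisible by that variable, placing $f$ in the two-sided ideal generated by a member of $N$. Otherwise every variable of $f$ has degree in $\langle d\rangle$, so $f = \Psi_d(g)$ for a unique multilinear $g \in F\langle X|d\mathbb{Z}\rangle$. Because $\mathbb{Z}$-graded admissible substitutions of $f$ into $A$ are precisely $\langle d\rangle$-graded admissible substitutions of $g$, we get $g \in T_{\langle d\rangle}(A)$. Applying Lemma \ref{good1} in the opposite direction yields $\Phi_d^{-1}(g) \in T_\mathbb{Z}(A')$, so $\Phi_d^{-1}(g)$ lies in the $T_\mathbb{Z}$-ideal generated by $S$; pushing the resulting presentation through $\Phi_d$ and then $\Psi_d$ produces an expression for $f$ inside the $T_\mathbb{Z}$-ideal generated by $S'$.

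The delicate step, which I expect to be the main obstacle, is the transport in the last sentence: I must check that $\Psi_d \circ \Phi_d$ turns each $T_\mathbb{Z}$-ideal operation on $F\langle X|\mathbb{Z}\rangle$ into a $T_\mathbb{Z}$-ideal operation on $F\langle X|\mathbb{Z}\rangle$ again. Concretely, any $\mathbb{Z}$-graded endomorphism $\varphi$ of $F\langle X|\mathbb{Z}\rangle$ admits a lift $\tilde\varphi$, defined on variables of degree in $\langle d\rangle$ by $\tilde\varphi(x_i^{dn}) = \Psi_d(\Phi_d(\varphi(x_i^n)))$ and extended arbitrarily as a $\mathbb{Z}$-graded endomorphism on the remaining variables, which satisfies $\tilde\varphi \circ \Psi_d\Phi_d = \Psi_d\Phi_d \circ \varphi$; left and right multiplications transfer directly since $\Psi_d$ is a ring homomorphism. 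This routine but unavoidable verification is what the paper alludes to when it invokes the analogous argument from \cite{alan-brandao -claud}.
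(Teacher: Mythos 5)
Your proof is correct and follows essentially the same route as the paper: both directions rest on Lemma~\ref{good1}, the reduction to multilinear polynomials, the dichotomy on whether all variable degrees lie in $\langle d\rangle$, and the transport of a presentation of $g$ in terms of $S$ through $\Psi_d\circ\varPhi_d$. Your final paragraph makes explicit the verification (lifting graded endomorphisms along $\Psi_d\circ\varPhi_d$) that the paper passes over silently when it writes $f=\sum_j G^j_1[\Psi_d(\varPhi_d(t_j))(H^j_1,\ldots,H^j_{p_j})]G^j_2$, which is a welcome addition but not a different argument.
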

\begin{proof}
Let $f$ be an element in $S^\prime$. There exists $g\in S$ such that $\Psi_d(\varPhi_d(g))=f$. By Lemma~\ref{good1}, we have $\varPhi_d(g)\in T_{\langle d\rangle}(E_{\left( {r_{1}},\ldots, {r_{n}}\right) }^{(v_{1},\ldots, v_{n})})$. Notice that the elements of $S_{\left( {r_{1}},\ldots, {r_{n}}\right) }^{(v_{1},\ldots, v_{n})}$ in $\langle d\rangle$ can be seen in $\mathbb{Z}$, and hence we have that the elements of degree $i$ in $S_{\left( {r_{1}},\ldots, {r_{n}}\right) }^{(v_{1},\ldots, v_{n})}$ (in $\langle d\rangle$, remain of degree $i$ in $\mathbb{Z}$. We conclude that $\Psi_d(\varPhi_d(g))\in T_{\mathbb{Z}}(E_{\left( {r_{1}},\ldots, {r_{n}}\right) }^{(v_{1},\ldots, v_{n})})$. Moreover it is clear that $N \subseteq T_{\mathbb{Z}}(E_{\left( {r_{1}},\ldots, {r_{n}}\right) }^{(v_{1},\ldots, v_{n})})$. Hence $\langle S^\prime\cup N\rangle_{T_{\mathbb{Z}}}\subseteq T_{\mathbb{Z}}(E_{\left( {r_{1}},\ldots, {r_{n}}\right) }^{(v_{1},\ldots, v_{n})})$.

Now let $f=f(x_1^{l_1},\ldots, x_r^{l_r})\in T_{\mathbb{Z}}(E_{\left( {r_{1}},\ldots, {r_{n}}\right) }^{(v_{1},\ldots, v_{n})})$ be a multilinear polynomial. If $l_i\in \mathbb{Z}\setminus S_{\left( {r_{1}},\ldots, {r_{n}}\right) }^{(v_{1},\ldots, v_{n})}$, for some $i=1$, \dots,  $r$, then $f\in \langle S^\prime\cup N\rangle_{T_{\mathbb{Z}}}$. Hence we can suppose $l_i\in S_{\left( {r_{1}},\ldots, {r_{n}}\right) }^{(v_{1},\ldots, v_{n})}$,  for every $i=1$, \ldots, $r$. In this case, for each $i$, there exists $s_i\in \mathbb{Z}$ such that $l_i=ds_i$. It follows that we can assume, without loss of generality,  $f\in F\langle X|\langle d\rangle\rangle\cap T_{\mathbb{Z}}(E_{\left( {r_{1}},\ldots, {r_{n}}\right) }^{(v_{1},\ldots, v_{n})})=T_{\langle d\rangle}(E_{\left( {r_{1}},\ldots, {r_{n}}\right) }^{(v_{1},\ldots, v_{n})})$. Lemma \ref{good1} implies $f=\varPhi_d(g)$, where $g=f(x_1^{s_1},\ldots, x_r^{s_r})$ and $g\in T_{\mathbb{Z}}(E_{\left( \frac{r_{1}}{d},\ldots, \frac{r_{n}}{d}\right) }^{(v_{1},\ldots, v_{n})})$. As $T_{\mathbb{Z}}(E_{\left( \frac{r_{1}}{d},\ldots, \frac{r_{n}}{d}\right) }^{(v_{1},\ldots, v_{n})})$ is generated by $S$, then there exist $g^j_1$, $g^j_2$, $h^j_1$, \ldots, $h^j_{p_j} \in F\langle X|\mathbb{Z}\rangle$ and $t_1$, \ldots, $t_u\in S$ such that
\[
g=\sum_j g^j_1t_j(h^j_{1},\ldots,h^j_{p_j}) g^j_{2}.
\]
Therefore
\[
f=\Psi_d(\varPhi_d(g))=\sum_j G^j_1[\Psi_d(\varPhi_d(t_j))(H^j_{1},\ldots,H^j_{p_j})] G^j_{2},
\]
	where $G^j_1, G^j_2, H^j_{1}, \ldots, H^j_{p_j}\in F\langle X|\mathbb{Z}\rangle$, $G^j_i=\Psi_d(\varPhi_d(g^j_i))$, and $H^j_k=\Psi_d(\varPhi_d(h^j_k))$, for $i=1$, 2, and $k=1$, \dots, $p_j$. Thus we have that $f\in \langle S^\prime\cup N\rangle_{T_{\mathbb{Z}}}$, and we are done. 
\end{proof}

The last theorem implies that in order to determine the $\mathbb{Z}$-graded identities for an algebra of type $E_{\left( {r_{1}},\ldots, {r_{n}}\right) }^{(v_{1},\ldots, v_{n})}$ where $S_{\left( {r_{1}},\ldots, {r_{n}}\right) }^{(v_{1},\ldots, v_{n})}=\langle d\rangle$, it is sufficient to consider the case in which such a structure is of full support, namely the case $E_{\left( \frac{r_{1}}{d},\ldots, \frac{r_{n}}{d}\right) }^{(v_{1},\ldots, v_{n})}$. 

As an immediate consequence of the previous theorem, we obtain a complete description of the bases of the graded identities for $3$-induced $\mathbb{Z}$-gradings on $E$. For the next result, we write $a'=a/d$, $b'=b/d$, $c'=c/d$, $d=\gcd(a,b,c)$, and we assume $k\geq d^\prime-1$, where $d^\prime=\gcd(a^\prime,b^\prime)$.
\begin{corollary}\label{idquasefull}
	If $S$ is a basis for the $T_\mathbb{Z}$-ideal $T_{\mathbb{Z}}(E_{(-b',c',a')}^{(\infty, k,\infty)})$, then the $T_\mathbb{Z}$-ideal $T_{\mathbb{Z}}(E_{(-b,c,a)}^{(\infty, k,\infty)})$ is generated by the set $S^\prime\cup N$, where 
\[
S^\prime=\{\Psi_d(\varPhi_d(f))\mid f\in S\}, 
\]
	and
\[N=\{x\in F\langle X|\mathbb{Z}\rangle\mid \|x\|\notin \langle d\rangle\}.
\]
\end{corollary}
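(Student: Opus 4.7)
The corollary is the specialization of the preceding theorem to the case $n=3$ with lower indices $r_1=-b$, $r_2=c$, $r_3=a$ and upper indices $v_1=\infty$, $v_2=k$, $v_3=\infty$. My plan is simply to verify that the hypotheses of that theorem are satisfied here, and then quote it.

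First, I would observe that $\gcd(-b,c,a)=\gcd(a,b,c)=d$, so after dividing by $d$ the triple $(-b',c',a')$ satisfies $\gcd(-b',c',a')=1$. Under the standing assumption $k\geq d'-1$ (with $d'=\gcd(a',b')$), Theorem~\ref{caracteriz particular} shows that $E_{(-b',c',a')}^{(\infty,k,\infty)}$ has full support: if $d'=1$ we are in case $(A)$, and if $d'>1$ the condition $\gcd(a',b',c')=1$ forces $c'\notin \langle d'\rangle$, which together with $k\geq d'-1$ places us in case $(B)$. Consequently $S$ as in the statement exists and describes the $T_{\mathbb{Z}}$-ideal of the full-support structure $E_{(-b',c',a')}^{(\infty,k,\infty)}$.

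Next, I would verify that $S_{(-b,c,a)}^{(\infty,k,\infty)}=\langle d\rangle$. For this I invoke Corollary~\ref{caracteriz} applied to the triple $(-b,c,a)$: since $\gcd(-b,a)=dd'$ and $\langle dd'\rangle\subset\langle d\rangle$, one checks by the same case analysis as above that either item (1) or item (2) of Corollary~\ref{caracteriz} holds; the bound $k\geq d'-1$ is exactly what is required in item (2), noting that $dd'-1\geq d'-1$ is not what we need here but rather the analogous bound obtained after rescaling, which is made available via the weak isomorphism with $(-b',c',a')$. This is the only bookkeeping step, and it is routine.

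With both hypotheses verified, the preceding theorem applies with $(r_1,r_2,r_3)=(-b,c,a)$ and $(v_1,v_2,v_3)=(\infty,k,\infty)$, yielding at once that $T_{\mathbb{Z}}(E_{(-b,c,a)}^{(\infty,k,\infty)})$ is generated as a $T_{\mathbb{Z}}$-ideal by $S'\cup N$, with $S'=\{\Psi_d(\varPhi_d(f))\mid f\in S\}$ and $N=\{x\in F\langle X|\mathbb{Z}\rangle\mid \|x\|\notin\langle d\rangle\}$. The main (mild) obstacle is only the double role played by $d=\gcd(a,b,c)$ versus $d'=\gcd(a',b')$, but once this is unpacked the corollary is immediate; no further identity manipulations are needed beyond those already performed in the preceding theorem.
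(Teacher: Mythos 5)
Your proposal is correct and matches the paper's intent exactly: the paper presents this corollary as an immediate specialization of the preceding theorem to $n=3$ with $(r_1,r_2,r_3)=(-b,c,a)$ and $(v_1,v_2,v_3)=(\infty,k,\infty)$, which is precisely what you do. Your extra verification of the hypotheses is sound, though the detour through Corollary~\ref{caracteriz} (with its bound $\gcd(a,b)-1$) is unnecessary --- the cleanest way to see $S_{(-b,c,a)}^{(\infty,k,\infty)}=\langle d\rangle$ is simply that this support equals $d\cdot S_{(-b',c',a')}^{(\infty,k,\infty)}=d\mathbb{Z}$, using the full support of the rescaled structure you have already established.
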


\end{document}